\def\bC {\mathbf{C}}
\def\bD {\mathbf{D}}
\def\bh {\mathbf{h}}
\def\bH {\mathbf{H}}
\def\bR {\mathbf{R}}
\def\fH {\mathfrak{H}}
\def\fS {\mathfrak{S}}
\def\cD {\mathcal{D}}
\def\cF {\mathcal{F}}
\def\cH {\mathcal{H}}
\def\cL {\mathcal{L}}
\def\cP {\mathcal{P}}
\def\cQ {\mathcal{Q}}
\def\cS {\mathcal{S}}
\def\cT {\mathcal{T}}
\def\cV {\mathcal{V}}
\def\de {{\delta}}
\def\eps {{\epsilon}}
\def\th {{\theta}}
\def\L {{\Lambda}}
\def\si {{\sigma}}
\def\d {{\partial}}
\def\grad {{\nabla}}
\def\Dlt {{\Delta}}
\def\rstr {{\big |}}
\def\la {\langle}
\def\ra {\rangle}
\newcommand{\Div}{\operatorname{div}}
\newcommand{\Tr}{\operatorname{trace}}
\newcommand{\Lip}{\operatorname{Lip}}
\newcommand{\MKp}{\operatorname{dist_{MK,p}}}
\newcommand{\MKd}{\operatorname{dist_{MK,2}}}
\newcommand{\Op}{\operatorname{OP}}
\newcommand{\ba}{\begin{aligned}}
\newcommand{\ea}{\end{aligned}}
\newcommand{\be}{\begin{equation}}
\newcommand{\ee}{\end{equation}}
\newcommand{\lb}{\label}
\newtheorem{Thm}{Theorem}[section]
\newtheorem{Rmk}[Thm]{Remark}
\newtheorem{Lem}[Thm]{Lemma}
\newtheorem{Def}[Thm]{Definition}
\begin{document}

\title[Mean Field and Classical Limits]{On the Mean Field and Classical Limits\\ of Quantum Mechanics}

\author[F. Golse]{Fran\c cois Golse}
\address[F.G.]{Ecole polytechnique, CMLS, 91128 Palaiseau Cedex, France}
\email{francois.golse@polytechnique.edu}

\author[C. Mouhot]{Cl\'ement Mouhot}
\address[C.M.]{University of Cambridge, DPMMS, Wilberforce Road, Cambridge CB3 0WB, United Kingdom}
\email{C.Mouhot@dpmms.cam.ac.uk}

\author[T. Paul]{Thierry Paul}
\address[T.P.]{CNRS and Ecole polytechnique, CMLS, 91128 Palaiseau Cedex, France}
\email{thierry.paul@polytechnique.edu}

\begin{abstract}
The main result in this paper is a new inequality bearing on solutions of the $N$-body linear Schr\"odinger equation and of the mean field Hartree equation. This inequality implies that the mean field limit of the quantum mechanics of $N$ 
identical particles is uniform in the classical limit and provides a quantitative estimate of the quality of the approximation. This result applies to the case of $C^{1,1}$ interaction potentials. The quantity measuring the approximation of the 
$N$-body quantum dynamics by its mean field limit is analogous to the Monge-Kantorovich (or Wasserstein) distance with exponent $2$. The inequality satisfied by this quantity is reminiscent of the work of Dobrushin on the mean field limit 
in classical mechanics [Func. Anal. Appl. \textbf{13} (1979), 115--123]. Our approach of this problem is based on a direct analysis of the $N$-particle Liouville equation, and avoids using techniques based on the BBGKY hierarchy or on 
second quantization.
\end{abstract}

\keywords{Schr\"odinger equation, Hartree equation, Liouville equation, Vlasov equation, Mean field limit, Classical limit, Monge-Kantorovich distance}

\subjclass{82C10, 35Q55 (82C05,35Q83)}

\maketitle

\rightline{\textit{In memory of Louis Boutet de Monvel (1941--2014)}}
%%%%%%%%%%%%%%%%%%%%%%%%%%%%%%%%%%%%%%%%%%%%%%%%%%%%%%%%%%%%%%%%%%%%%%%%%%%%%%%%%%%%%%%%%%%%%%%%%%%%%%%%%%%%%%%%%%%%%%%%%

\section{Statement of the problem}

%%%%%%%%%%%%%%%%%%%%%%%%%%%%%%%%%%%%%%%%%%%%%%%%%%%%%%%%%%%%%%%%%%%%%%%%%%%%%%%%%%%%%%%%%%%%%%%%%%%%%%%%%%%%%%%%%%%%%%%%%

In nonrelativistic quantum mechanics, the dynamics of $N$ identical particles of mass $m$ in $\bR^d$ is described by the linear Schr\"odinger equation
$$
i\hbar\d_t\Psi=-\frac{\hbar^2}{2m}\sum_{k=1}^N\Dlt_{x_k}\Psi+\tfrac12\sum_{k,l=1}^NV(x_k-x_l)\Psi\,,
$$
where the unknown is $\Psi\equiv\Psi(t,x_1,\ldots,x_N)\in\bC$, the $N$-particle wave function, while $x_1,x_2,\ldots,x_N$ designate the positions of the $1$st, $2$nd,\dots, $N$th particle. The interaction between the $k$th and $l$th particles
is given by the potential $V$, a real-valued measurable function defined a.e. on $\bR^d$, such that
\be\lb{EvenV}
V(z)=V(-z)\,,\quad\hbox{ for a.e. }z\in\bR^d\,.
\ee

Denoting the macroscopic length scale by $L\!>\!0$, we define a time scale $T\!>\!0$ such that the total interaction energy of the typical particle with the $N-1$ other particles is of the order of $m(L/T)^2$. With the dimensionless space and time 
variables defined as
$$
\hat x:=x/L\,,\qquad\hat t:=t/T\,,
$$
the interaction potential is scaled as 
$$
\hat V(\hat z):=\frac{NT^2}{mL^2}V(z)\,.
$$
In terms of the dimensionless parameter
$$
\eps:=\hbar T/mL^2
$$
and the new unknown
$$
\hat\Psi(\hat t,\hat x_1,\ldots,\hat x_N):=\Psi(t,x_1,\ldots,x_N)\,,
$$
the Schr\"odinger equation becomes
$$
i\d_{\hat t}\hat\Psi=-\tfrac12\eps\sum_{k=1}^N\Dlt_{\hat x_k}\hat\Psi+\frac1{2N\eps}\sum_{k,l=1}^N\hat V(\hat x_k-\hat x_l)\hat\Psi\,.
$$
In the present paper, we obtain a new estimate for solutions of this Schr\"odinger equation which is of particular interest in the asymptotic regime where
$$
\eps\ll 1\hbox{ (classical limit) }\quad\hbox{ and }\quad N\gg 1\hbox{ (mean field limit)}\,.
$$
Henceforth we drop all hats on rescaled quantities and consider the Cauchy problem
\be\lb{CPSchr}
\left\{
\ba
{}&i\d_t\Psi_{\eps,N}=-\tfrac12\eps\sum_{k=1}^N\Dlt_{x_k}\Psi_{\eps,N}+\frac1{2N\eps}\sum_{k,l=1}^NV(x_k-x_l)\Psi_{\eps,N}\,,
\\
&\Psi_{\eps,N}\rstr_{t=0}=\Psi^{in}_{\eps,N}\,.
\ea
\right.
\ee

While the discussion above applies to all types of particles, until the end of this section we restrict our attention to the case of bosons, i.e. to the case where the wave function $\Psi_{\eps,N}$ is a symmetric function of the space variables $x_1,\ldots,x_N$.

A typical example of relevant initial data for (\ref{CPSchr}) is
$$
\Psi_{\eps,N}^{in}(x_1,\ldots,x_N)=(\pi\eps)^{-dN/4}\prod_{j=1}^Ne^{-(x_j-q)^2/2\eps}e^{ip\cdot x_j/\eps}\,,
$$
where $p,q\in\bR^d$ are parameters. In other words, in this example, the initial wave function is the $N$-fold tensor product of Gaussian wave functions with width $\sqrt{\eps}$ and oscillations at frequency $O(1/\eps)$.

\subsection{The mean field limit}

%%%%%%%%%%%%%%%%%%%%%%%%%%%%%%%%%%%%%%%%%%%%%%%%%%%%%%%%%%%%%%%%%%%%%%%%%%%%%%%%%%%%%%%%%%%%%%%%%%%%%%%%%%%%%%%%%%%%%%%%%

The mean field limit is the asymptotic regime where $N\to\infty$, with $\eps>0$ fixed. Set the initial data in (\ref{CPSchr}) to be
$$
\Psi^{in}_{\eps,N}(x_1,\ldots,x_N):=\prod_{k=1}^N\psi^{in}_\eps(x_k)\quad\hbox{ with }\quad\int_{\bR^d}|\psi^{in}_\eps(x)|^2dx=1\,,
$$
and let $\Psi_{\eps,N}$ be the solution of the Cauchy problem (\ref{CPSchr}) --- which exists for all times provided that $V$ is such that
$$
-\tfrac12\eps\sum_{k=1}^N\Dlt_{x_k}+\frac1{2N\eps}\sum_{k,l=1}^NV(x_k-x_l)
$$
has a self-adjoint extension as an unbounded operator on $L^2((\bR^d)^N)$. Under various assumptions on $\psi^{in}$ and $V$, it is known that, for each $t\in\bR$,
$$
\int_{(\bR^d)^{N-1}}\Psi_{\eps,N}(t,x,z_2,\ldots,z_N)\overline{\Psi_{\eps,N}(t,y,z_2,\ldots,z_N)}dz_2\ldots dz_N\to\psi_\eps(t,x)\overline{\psi_\eps(t,y)}
$$
in some appropriate sense as $N\to\infty$, where $\psi_\eps$ is the solution of the Hartree equation
\be\lb{CPH}
\left\{
\ba
{}&i\d_t\psi_\eps=-\tfrac12\eps\Dlt_{x}\psi_\eps+\frac1{\eps}\psi_\eps(t,x)\int_{\bR^d}V(x-z)|\psi_\eps(t,z)|^2dz\,,
\\
&\psi_\eps\rstr_{t=0}=\psi^{in}_\eps\,.
\ea
\right.
\ee
See \cite{Spohn, BGM, ErdY, AdaFGTeta, ErdYSch1, ErdYSch2, FrohKnow, RodSch, Pickl1, Pickl2, PicklKnow} for various results in this direction, obtained under different assumptions on the regularity of the interaction potential $V$. Most of
the physically relevant particle interactions, especially the case where $V$ is the Coulomb potential, are covered by these results, but not always with a quantitative error estimate.

\subsection{The classical limit}

%%%%%%%%%%%%%%%%%%%%%%%%%%%%%%%%%%%%%%%%%%%%%%%%%%%%%%%%%%%%%%%%%%%%%%%%%%%%%%%%%%%%%%%%%%%%%%%%%%%%%%%%%%%%%%%%%%%%%%%%%

The classical limit is the asymptotic regime where $\eps\to 0$ while $N$ is kept fixed in (\ref{CPSchr}) --- or simply $\eps\to 0$ in (\ref{CPH}). The formalism of the \textit{Wigner transform} is perhaps the most convenient way to formulate this limit. 
Given $\Phi\equiv\Phi(X)\in\bC$, an element of $L^2(\bR^n)$, its Wigner transform at scale $\eps$ is
$$
W_\eps[\Phi](X,\Xi):=\tfrac1{(2\pi)^n}\int_{\bR^n}\Phi\left(X+\tfrac12\eps Y\right)\overline{\Phi\left(X-\tfrac12\eps Y\right)}e^{-i\Xi\cdot Y}dY\,.
$$
Assume that the initial data in (\ref{CPSchr}) is a family $\Psi^{in}_{\eps,N}$ such that
$$
W_\eps[\Psi^{in}_{\eps,N}]\to F^{in}_N\quad\hbox{ in }\cS'((\bR^d\times\bR^d)^N)\hbox{  as }\eps\to 0\,.
$$
Then, for all $t\in\bR$, the family of solutions $\Psi_{\eps,N}$ of the Cauchy problem (\ref{CPSchr}) satisfies
$$
W_\eps[\Psi_{\eps,N}(t,\cdot)]\to F_N(t,\cdot,\cdot)\quad\hbox{ in }\cS'((\bR^d\times\bR^d)^N)\hbox{  as }\eps\to 0\,,
$$
where $F_N\equiv F_N(t,x_1,\ldots,x_N,\xi_1,\ldots,\xi_N)\ge 0$ is the solution of the following Cauchy problem for the $N$-body Liouville equation of classical mechanics
\be\lb{NLiouv}
\left\{
\ba
{}&\d_tF_N+\sum_{k=1}^N\xi_k\cdot\grad_{x_k}F_N-\frac1N\sum_{k,l=1}^N\grad V(x_k-x_l)\cdot\grad_{\xi_k}F_N=0\,,
\\
&F_N\rstr_{t=0}=F_N^{in}\,.
\ea
\right.
\ee
The classical limit of the Hartree equation (\ref{CPH}) can be formulated similarly. Assume that the initial data in (\ref{CPH}) is a family $\psi^{in}_\eps$ such that
$$
W_\eps[\psi^{in}_\eps]\to f^{in}\quad\hbox{ in }\cS'(\bR^d\times\bR^d)\hbox{  as }\eps\to 0\,.
$$
Then, for all $t\in\bR$, the family of solutions $\psi_\eps$ of the Hartree equation (\ref{CPH}) satisfies
$$
W_\eps[\psi_\eps(t,\cdot)]\to f(t,\cdot,\cdot)\quad\hbox{ in }\cS'(\bR^d\times\bR^d)\hbox{  as }\eps\to 0\,,
$$
where $f\equiv f(t,x,\xi)\ge 0$ is the solution of the following Cauchy problem for the Vlasov equation of classical mechanics with interaction potential $V$:
\be\lb{Vlas}
\left\{
\ba
{}&\d_tf+\xi\cdot\grad_xf-\left(\int_{\bR^d}\grad V(x-z)f(t,z)dz\right)\cdot\grad_\xi f=0\,,
\\
&f\rstr_{t=0}=f^{in}\,.
\ea
\right.
\ee
See \cite{PLLTP, GMMP} for results on the classical limit of quantum mechanics involving the Wigner transform.

\subsection{The mean field limit in classical mechanics}\lb{SS-MFCM}

%%%%%%%%%%%%%%%%%%%%%%%%%%%%%%%%%%%%%%%%%%%%%%%%%%%%%%%%%%%%%%%%%%%%%%%%%%%%%%%%%%%%%%%%%%%%%%%%%%%%%%%%%%%%%%%%%%%%%%%%%

There is also a notion of mean field limit in classical mechanics, which can be formulated as follows. Assume that the initial data in (\ref{NLiouv}) is
$$
F^{in}_N(x_1,\ldots,x_N,\xi_1,\ldots,\xi_N)=\prod_{k=1}^Nf^{in}(x_k,\xi_k)\,,
$$
where $f^{in}$ is a probability density on $\bR^d\times\bR^d$. Under various assumptions on the potential $V$, the solution $F_N$ of (\ref{NLiouv}) satisfies
$$
\int_{(\bR^d\times\bR^d)^{N-1}}F_N(t,x,x_2,\ldots,x_N,\xi,\xi_2,\ldots,\xi_N)dx_2d\xi_2\ldots dx_Nd\xi_N\to f(t,x,\xi)
$$
in some appropriate sense as $N\to\infty$, where $f$ is the solution of the Cauchy problem for the Vlasov equation (\ref{Vlas}). See \cite{NeunWick, BraunHepp, Dobrush} for the missing details. All these references address the problem of 
the mean field limit in terms of the empirical measure of the $N$-particle system. Typically these results cover the case where $V\in C^{1,1}(\bR)$, but the case of a Coulomb, or Newtonian interaction remains open at the time of this writing. 
For a formulation of the same results in terms of the BBGKY hierarchy, see \cite{FGMouRi, MischMouWenn}. 

Yet, there has been some recent progress on the case of singular potentials. In \cite{HaurJab}, the mean field limit has been established for interaction potentials with a singularity at the origin that is weaker than that of the Coulomb
potential. Another approach to the mean field limit in the case of singular interaction involves a truncated variant of the potential with a cutoff parameter $\eta\equiv\eta(N)>0$ assumed to vanish as the number of particles $N\to\infty$: see
\cite{HaurJab,Laza,LazaPickl} for the most recent results in that direction. This cutoff parameter can be thought of as being of the order of the size of the interacting particles, as explained in \cite{Laza}.

\bigskip
The situation described above can be summarized in the following diagram: the horizontal arrows correspond to the mean field limit, while the vertical arrows correspond to the classical limit.

\bigskip

\begin{center}
\begin{tabular}{ccc}
{\fbox{\bf Schr\"odinger}}& {$\stackrel{N\to\infty}{\longrightarrow}$}& {\fbox{\bf Hartree}} 
\\ [3mm]
{$\downarrow$} & & {$\downarrow$} 
\\[3mm]
{${\eps\to0}$}&& {${\eps\to 0}$} 
\\ [3mm]
$\downarrow$&& $\downarrow$ 
\\ [3mm]
{\fbox{\bf Liouville}}& {$\stackrel{N\to\infty}{\longrightarrow}$}&{\fbox {\bf Vlasov}} 
\end{tabular}
\end{center}

\bigskip
However, these various limits are established by very different methods. The classical limits of either the $N$-body Schr\"odinger equation or of the Hartree equation are obtained by a compactness argument and the uniqueness of the 
solution of the Cauchy problems (\ref{NLiouv}) or (\ref{Vlas}). Error estimates for these limits require rather stringent assumptions on the regularity of the potential $V$ and on the type of initial wave or distribution functions considered. The 
mean field limit in quantum mechanics (the upper horizontal arrow) comes from trace norm estimates on the infinite hierarchy of equations obtained from the BBGKY hierarchy in the large $N$ limit. The trace norm is the quantum analogue 
of the total variation norm on the probability measures appearing in the classical setting. In general, the total variation norm is not convenient in the context of the mean field limit, since it does not capture the distance between neighboring 
point particles. This suggests that the trace norm is not appropriate to obtain controls on the large $N$ (mean field) limit which remain uniform in the vanishing $\eps$ (classical) limit.

Another notable difficulty with this problem is that the mean field limit in classical mechanics is obtained by proving the weak convergence of the $N$-particle empirical measure in the single-particle phase space to the solution of the Vlasov 
equation. Since there does not seem to be any natural analogue of the notion of empirical measure for a quantum $N$-particle system, the analogy between the quantum and the classical mean field limits is far from obvious. 

Our main result, stated as Theorem \ref{T-UQMF} below, is a new quantitative estimate for the mean field limit $N\to\infty$ of quantum mechanics which is uniform in the classical limit $\eps\to 0$.

%%%%%%%%%%%%%%%%%%%%%%%%%%%%%%%%%%%%%%%%%%%%%%%%%%%%%%%%%%%%%%%%%%%%%%%%%%%%%%%%%%%%%%%%%%%%%%%%%%%%%%%%%%%%%%%%%%%%%%%%%

\section{Main result}

%%%%%%%%%%%%%%%%%%%%%%%%%%%%%%%%%%%%%%%%%%%%%%%%%%%%%%%%%%%%%%%%%%%%%%%%%%%%%%%%%%%%%%%%%%%%%%%%%%%%%%%%%%%%%%%%%%%%%%%%%

Let $d$ be a positive integer. Henceforth we set $\fH:=L^2(\bR^d)$, and $\fH_N:=\fH^{\otimes N}\simeq L^2((\bR^d)^N)$ for each $N\ge 1$. We designate by $\cL(\fH)$ the algebra of bounded linear operators on the Hilbert space $\fH$.
We denote by $\cD(\fH_N)$ the set of operators $A\in\cL(\fH_N)$ such that
$$
A=A^*\ge 0\,,\qquad\hbox{ and }\Tr(A)=1\,.
$$

We are concerned with the $N$-body Schr\"odinger equation written in terms of density matrices, i.e. the von Neumann equation
\be\lb{NBodyQuantRho}
\left\{
\ba
{}&i\d_t\rho_{\eps,N}=\left[-\tfrac12\eps\sum_{k=1}^N\Dlt_k+\frac1{2N\eps}\sum_{k,l=1}^NV_{kl},\rho_{\eps,N}\right]\,,
\\
&\rho_{\eps,N}\rstr_{t=0}=\rho_{\eps,N}^{in}\,,
\ea
\right.
\ee
where $\rho_{\eps,N}(t)\in\cD(\fH_N)$, while
$$
\Dlt_k:=I_\fH^{\otimes(k-1)}\otimes\Dlt\otimes I_{\fH}^{\otimes(N-k)}\,,
$$
and
\be\lb{DefVjk}
(V_{jk}\psi)(x_1,\ldots,x_N):=V(x_k-x_j)\psi(x_1,\ldots,x_N)\,,\quad\hbox{ for each }\psi\in\fH_N\,.
\ee
The notation $I_\fH$ designates the identity on the Hilbert space $\fH$.

We shall everywhere restrict our attention to symmetric density matrices, corresponding to indistinguishable particles. In other words, we assume that
\be\lb{SymDens(t)}
\tau_\si\rho_{\eps,N}(t)\tau^*_\si=\rho_{\eps,N}(t)
\ee
for each $t\in\bR$ and each $\si\in\fS_N$, where $\tau_\si$ is the unitary operator defined on $L^2((\bR^d)^N)$ by the formula
\be\lb{DefTauSi}
\tau_\si\Phi(x_1,\ldots,x_N):=\Phi(x_{\si^{-1}(1)},\ldots,x_{\si^{-1}(N)})\,.
\ee
One easily checks that the condition
\be\lb{SymDensIn}
\tau_\si\rho_{\eps,N}^{in}\tau^*_\si=\rho_{\eps,N}^{in}
\ee
implies that (\ref{SymDens(t)}) holds for each $t\in\bR$, since 
$$
\left[\tau_\si,-\tfrac12\eps\sum_{k=1}^N\Dlt_k+\frac1{2N\eps}\sum_{k,l=1}^NV_{kl}\right]=0\quad\hbox{ for each }\si\in\fS_N\,.
$$

On the other hand, we consider the corresponding mean field equation, i.e. the Hartree equation written in terms of the density matrix $\rho_\eps(t)\in\cD(\fH)$
\be\lb{HarRho}
\left\{
\ba
{}&i\d_t\rho_\eps=\left[-\tfrac12\eps\Dlt+\frac1{\eps}V_{\rho_\eps},\rho_\eps\right]\,,
\\
&\rho_\eps\rstr_{t=0}=\rho_\eps^{in}\,,
\ea
\right.
\ee
where $V_{\rho_\eps}$ designates both the function
$$
V_{\rho_\eps}(t,x):=\int_{\bR^d}V(x-z)\rho_\eps(t,z,z)dz
$$
and the time-dependent multiplication operator defined on $\fH$ by 
$$
(V_{\rho_\eps}\psi)(t,x):=V_{\rho_\eps}(t,x)\psi(x)\,.
$$

Next we formulate the mean field limit in terms of density operators. For each $N$-particle density operator $\rho_N\in\cD(\fH_N)$, we define its first $n$-particle marginal density operator, denoted by $\rho^\mathbf{n}_N$ for each integer
$n$ such that $1\le n\le N$, by the following conditions:
$$
\left\{
\ba
{}&\,\rho^\mathbf{n}_N\in\cD(\fH_n)\,,\quad\hbox{ and }
\\
&\Tr_{\fH_n}(A\rho^\mathbf{n}_N)=\Tr_{\fH_N}((A\otimes I_{\fH_{N-n}})\rho_N)\quad\hbox{ for each }A\in\cL(\fH_n)\,.
\ea
\right.
$$
In the mean field limit, i.e. for $N\to\infty$ while $\eps>0$ is kept fixed, one expects that the sequence $\rho^\mathbf{1}_{\eps,N}$ of first marginals of the density operators $\rho_{\eps,N}$ solutions of (\ref{NBodyQuantRho}) converges in 
some topology to the solution $\rho_\eps$ of (\ref{HarRho}), provided that $\rho_{\eps,N}^{in}$ approaches $(\rho_\eps^{in})^{\otimes N}$ in some appropriate sense. The difference between $\rho^\mathbf{1}_{\eps,N}$ and $\rho_\eps$ 
is measured in terms of a quantity analogous to the Monge-Kantorovich distance used in the context of optimal transport.

\smallskip
First we define the notion of coupling between two density operators.

\begin{Def}\lb{D-DefQ}
Let $d$ be a positive integer and let $\fH:=L^2(\bR^d)$. For each $\rho,\overline{\rho}\in\cD(\fH)$, let $\cQ(\rho,\overline{\rho})$ be the set of $R\in\cD(\fH_2)$ such that
$$
\left\{
\ba
{}&\Tr_{\fH_2}((A\otimes I_\fH)R)=\Tr_\fH(A\rho)
\\
&\Tr_{\fH_2}((I_\fH\otimes A)R)=\Tr_\fH(A\overline{\rho})
\ea
\right.
$$
for each $A\in\cL(\fH)$.
\end{Def}

\smallskip
Next we define two unbounded operators on $\fH_2\simeq L^2(\bR^d\times\bR^d)$, as follows:
\be\lb{DefPQ}
\left\{
\ba
{}&(Q\psi)(x_1,x_2):=(x_1-x_2)\psi(x_1,x_2)\,,
\\
&(P\psi)(x_1,x_2):=-i\eps(\grad_{x_1}-\grad_{x_2})\psi(x_1,x_2)\,,
\ea
\right.
\ee
so that
\be\lb{P*PQ*Q}
(P^*P\psi)(x_1-x_2)=-\eps^2(\Div_{x_1}-\Div_{x_2})(\grad_{x_1}-\grad_{x_2})\psi(x_1,x_2)\,.
\ee

\smallskip
The quantum analogue of the Monge-Kantorovich distance with exponent $2$ is defined as follows. For the definition of Monge-Kantorovich distances, also called Wasserstein distances, see formula (\ref{DefMKp}) in section \ref{S-MFCM}, 
or chapter 7 in \cite{VillaniTOT}, or chapter 6 in \cite{VillaniTOT2}.

\begin{Def}\lb{D-DefMKeps}
For each $\rho,\overline{\rho}\in\cD(\fH)$, we set
$$
MK^\eps_2(\rho,\overline{\rho}):=\inf_{R\in\cQ(\rho,\overline{\rho})}\Tr_{\fH_2}((Q^*Q+P^*P)R)^{1/2}
$$
with the following convention:
$$
\Tr_{\fH_2}((Q^*Q+P^*P)R):=\Tr_{\fH_2}(R^{1/2}(Q^*Q+P^*P)R^{1/2})
$$
if $R^{1/2}(Q^*Q+P^*P)R^{1/2}$ is a trace-class operator, and
$$
\Tr_{\fH_2}((Q^*Q+P^*P)R):=+\infty
$$
otherwise.
\end{Def}

\smallskip
The quantity $MK^\eps_2$ is not a distance on $\cD(\fH)$. In fact, for each density operator $\rho$ on $\fH$, one has $MK^\eps_2(\rho,\rho)>0$ (see formula (\ref{LBMKeps}) below). However, $MK^\eps_2$ can be compared with the 
Monge-Kantorovich distance with exponent $2$ (see formula (\ref{DefMKp}) below with $p=2$ for a definition of this distance, and Theorem \ref{T-MKeps} (2) below for more information on this comparison), at least for a certain class 
of operators and in the limit as $\eps\to 0$. 

\smallskip
Henceforth, we denote by $\cP(\bR^d)$ the set of Borel probability measures on $\bR^d$. For each $q>0$, we define
$$
\cP_q(\bR^d):=\left\{\mu\in\cP(\bR^d)\hbox{ s.t. }\int_{\bR^d}|x|^q\mu(dx)<\infty\right\}\,.
$$

\begin{Thm}[Properties of $MK^\eps_2$]\lb{T-MKeps} Let $d$ be a positive integer and let $\fH:=L^2(\bR^d)$. For each $\rho,\overline{\rho}\in\cD(\fH)$ and each $\eps>0$, one has
\be\lb{LBMKeps}
MK^\eps_2(\rho,\overline{\rho})^2\ge2d\eps\,.
\ee
(1) Let $\eps>0$ and let $\rho^\eps_1$ and $\rho^\eps_2$ be T\"oplitz operators at scale $\eps$ on $L^2(\bR^d)$ with symbols $(2\pi\eps)^d\mu_1$ and $(2\pi\eps)^d\mu_2$, where $\mu_1,\mu_2\in\cP_2(\bR^{2d})$. Then
$$
\ba
MK^\eps_2(\rho^\eps_1,\rho^\eps_2)^2&\le\inf_{\mu\in\Pi(\mu_1,\mu_2)}\Tr_{\fH\otimes\fH}((Q^*Q+P^*P)\Op^T_\eps((2\pi\eps)^{2d}\mu))
\\
&=\MKd(\mu_1,\mu_2)^2+2d\eps\,.
\ea
$$
(2) Let $\rho^\eps_1$ and $\rho^\eps_2\in\cD(\fH)$, with Husimi transforms at scale $\eps$ denoted respectively $\tilde W_\eps[\rho^\eps_1]$ and $\tilde W_\eps[\rho^\eps_2]$. Then
$$
MK^\eps_2(\rho^\eps_1,\rho^\eps_2)^2\ge\MKd(\tilde W_\eps[\rho^\eps_1],\tilde W_\eps[\rho^\eps_2])^2-2d\eps\,.
$$
Assume further that the Wigner transforms at scale $\eps$ of $\rho^\eps_1$ and $\rho^\eps_2$, denoted respectively $W_\eps[\rho^\eps_1]$ and $W_\eps[\rho^\eps_2]$, converge in $\cS'(\bR^{2d})$ to Wigner measures denoted 
respectively $w_1$ and $w_2$ as $\eps\to 0$. Then
$$
\MKd(w_1,w_2)\le\varliminf_{\eps\to 0}MK^\eps_2(\rho^\eps_1,\rho^\eps_2)\,.
$$
\end{Thm}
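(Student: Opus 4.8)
\emph{Plan.} Everything reduces to one unbounded operator, $T:=Q^*Q+P^*P$ on $\fH_2=L^2(\bR^d\times\bR^d)$, about which I would record two facts. (a) In the orthogonal variables $u=\tfrac1{\sqrt2}(x_1-x_2)$, $v=\tfrac1{\sqrt2}(x_1+x_2)$ one has $T=2(-\eps^2\Dlt_u+|u|^2)\otimes I_v$, i.e. twice the $d$-dimensional harmonic oscillator at scale $\eps$ in the relative variable; its bottom eigenvalue is $d\eps$, so $T\ge 2d\eps\,I_{\fH_2}$ as a self-adjoint operator. Hence $\Tr_{\fH_2}(R^{1/2}TR^{1/2})\ge 2d\eps\,\Tr(R)=2d\eps$ for every $R\in\cD(\fH_2)$, which is (\ref{LBMKeps}). (b) In the Töplitz (anti-Wick) calculus at scale $\eps$ on $L^2(\bR^d\times\bR^d)$, with coherent states $|z_1,z_2\rangle=|z_1\rangle\otimes|z_2\rangle$ indexed by $z_i=(q_i,p_i)\in\bR^{2d}$, one has
$$
T+2d\eps\,I_{\fH_2}=\Op^T_\eps\big(|q_1-q_2|^2+|p_1-p_2|^2\big),\qquad \langle z_1,z_2|T|z_1,z_2\rangle=|q_1-q_2|^2+|p_1-p_2|^2+2d\eps .
$$
Both follow from the one-mode formulas $\Op^T_\eps(q^2)=\hat x^2+\tfrac\eps2$, $\Op^T_\eps(p^2)=\hat p^2+\tfrac\eps2$ (with $\hat p=-i\eps\partial_x$), and $\Op^T_\eps(q_aq_b)=\hat x_a\hat x_b$, $\Op^T_\eps(p_ap_b)=\hat p_a\hat p_b$ for distinct modes, applied coordinate by coordinate; equivalently, from a direct Gaussian integration against the coherent states, whose position and momentum variances are $\tfrac\eps2$ per coordinate.

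\emph{Part (1).} For $\mu\in\Pi(\mu_1,\mu_2)$ the operator $\Op^T_\eps((2\pi\eps)^{2d}\mu)=\int|z_1,z_2\rangle\langle z_1,z_2|\,\mu(dz_1dz_2)$ lies in $\cQ(\rho_1^\eps,\rho_2^\eps)$: partial-tracing over the second factor and using $\langle z_2|z_2\rangle=1$ gives $\int|z_1\rangle\langle z_1|\,\mu_1(dz_1)=\Op^T_\eps((2\pi\eps)^d\mu_1)=\rho_1^\eps$, and symmetrically for the second marginal. Hence $MK^\eps_2(\rho_1^\eps,\rho_2^\eps)^2$ is bounded above by the infimum over such Töplitz couplings, and, since $\Tr(A|z\rangle\langle z|)=\langle z|A|z\rangle$,
$$
\Tr_{\fH_2}\big(T\,\Op^T_\eps((2\pi\eps)^{2d}\mu)\big)=\int\big(|q_1-q_2|^2+|p_1-p_2|^2+2d\eps\big)\,\mu(dz_1dz_2)=\int|z_1-z_2|^2\,\mu(dz_1dz_2)+2d\eps
$$
(finite because $\mu_1,\mu_2\in\cP_2(\bR^{2d})$). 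Taking the infimum over $\mu\in\Pi(\mu_1,\mu_2)$ and invoking the definition of $\MKd$ gives the equality $\MKd(\mu_1,\mu_2)^2+2d\eps$ and hence the inequality of Part (1).

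\emph{Part (2).} For $R\in\cQ(\rho_1^\eps,\rho_2^\eps)$, introduce the phase-space density $\tilde W_\eps[R](z_1,z_2):=(2\pi\eps)^{-2d}\langle z_1,z_2|R|z_1,z_2\rangle\ge0$ on $\bR^{2d}\times\bR^{2d}$, of total mass $\Tr(R)=1$ by the resolution of identity. Integrating out $z_2$ and using $\int|z_2\rangle\langle z_2|\,dz_2=(2\pi\eps)^dI_\fH$ together with the marginal property of $R$, its $z_1$-marginal is $(2\pi\eps)^{-d}\langle z_1|\rho_1^\eps|z_1\rangle=\tilde W_\eps[\rho_1^\eps]$, and likewise the $z_2$-marginal is $\tilde W_\eps[\rho_2^\eps]$; thus $\tilde W_\eps[R]\in\Pi(\tilde W_\eps[\rho_1^\eps],\tilde W_\eps[\rho_2^\eps])$. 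Using $\Tr_{\fH_2}(\Op^T_\eps(b)R)=\int b\,\tilde W_\eps[R]$ and the identity of (b),
$$
\MKd(\tilde W_\eps[\rho_1^\eps],\tilde W_\eps[\rho_2^\eps])^2\le\int|z_1-z_2|^2\,\tilde W_\eps[R]=\Tr_{\fH_2}\big((T+2d\eps I_{\fH_2})R\big)=\Tr_{\fH_2}(R^{1/2}TR^{1/2})+2d\eps ,
$$
both sides being $+\infty$ together when $R^{1/2}TR^{1/2}$ is not trace class. Minimising over $R$ yields $\MKd(\tilde W_\eps[\rho_1^\eps],\tilde W_\eps[\rho_2^\eps])^2\le MK^\eps_2(\rho_1^\eps,\rho_2^\eps)^2+2d\eps$, the first assertion of Part (2). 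For the last one, the Husimi transform is the Wigner transform convolved with a phase-space Gaussian of variance $O(\eps)$, so $W_\eps[\rho_i^\eps]\to w_i$ in $\cS'(\bR^{2d})$ forces $\tilde W_\eps[\rho_i^\eps]\to w_i$ in $\cS'(\bR^{2d})$; since the Husimi densities are probability measures, this is narrow convergence once $w_i$ is a probability measure (and if mass escapes to infinity the quantities involved are $+\infty$ and the inequality is trivial). Joint lower semicontinuity of $\MKd$ under narrow convergence then gives
$$
\MKd(w_1,w_2)^2\le\varliminf_{\eps\to0}\MKd(\tilde W_\eps[\rho_1^\eps],\tilde W_\eps[\rho_2^\eps])^2\le\varliminf_{\eps\to0}\big(MK^\eps_2(\rho_1^\eps,\rho_2^\eps)^2+2d\eps\big)=\varliminf_{\eps\to0}MK^\eps_2(\rho_1^\eps,\rho_2^\eps)^2 ,
$$
and taking square roots finishes the proof.

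\emph{Main obstacle.} The computations are structurally simple; the friction is rigour at the two spots where unbounded operators or weak limits enter. First, one must upgrade the formal symbol identity $T+2d\eps\,I_{\fH_2}=\Op^T_\eps(|z_1-z_2|^2)$ to a genuine identity of positive self-adjoint operators, so that the trace identities in Parts (1) and (2) — including the $+\infty$ convention of Definition \ref{D-DefMKeps} — are literally valid, and so that $\tilde W_\eps[R]$ is indeed an admissible classical coupling. Second, and this is the one I expect to be more delicate, there is the passage to the limit: one has to pin down the mode of convergence of the Husimi densities (narrow convergence, tightness/moment control, or the bookkeeping for Wigner measures of mass less than one) before the lower semicontinuity of the quadratic Monge--Kantorovich distance may be invoked.
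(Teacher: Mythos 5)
Your proof is correct, and in two places it takes a genuinely different route from the paper's. For the universal lower bound (\ref{LBMKeps}), the paper writes $Q^*Q+P^*P=(Q+iP)^*(Q+iP)+i(P^*Q-Q^*P)\ge i(P^*Q-Q^*P)=2d\eps I$, i.e. a commutator computation, whereas you diagonalise $T:=Q^*Q+P^*P$ into twice a $d$-dimensional harmonic oscillator in the relative variable and read off the bottom of the spectrum; these are the same uncertainty-principle fact in two guises, with no difference in rigour. For Part (1) your argument coincides with the paper's (the paper isolates the coupling statement as Lemma \ref{L-QCoupl} and uses formulas (\ref{ToepQuad}) and (\ref{MK2}) rather than coherent-state expectation values, but these are the same computations). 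The genuinely different choice is in the first inequality of Part (2): the paper works on the dual side, taking Kantorovich potentials $a,b\in C_b$ with $a\oplus b\le|z_1-z_2|^2$, quantising them, and using positivity of the Töplitz calculus to bound $\Tr(TR)$ from below, then invoking Kantorovich duality. You instead give a primal argument, showing that the Husimi transform $\tilde W_\eps[R]$ of any quantum coupling $R\in\cQ(\rho_1^\eps,\rho_2^\eps)$ is itself a classical coupling of the two Husimi densities (via the resolution of the identity and the defining marginal property of $R$), and then evaluating the quadratic transport cost of this particular coupling against $\Tr((T+2d\eps I)R)$ by (\ref{TrToep}). This nicely mirrors Part (1), where a classical coupling is lifted to a quantum one by Töplitz quantisation; here a quantum coupling is pushed down to a classical one by the Husimi map. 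The paper's dual route needs Kantorovich duality as an external ingredient, whereas yours is self-contained given the Töplitz/Husimi calculus; on the other hand the paper's route never has to verify marginal identities for $\tilde W_\eps[R]$ or worry about which $b$ are admissible Töplitz symbols. Either approach handles the $+\infty$ convention of Definition \ref{D-DefMKeps} correctly since the cost symbol is nonnegative and (\ref{TrToep}) holds in $[0,+\infty]$ by monotone convergence. For the $\varliminf$ assertion your argument (weak-$*$ convergence of Husimi densities to Wigner measures, then lower semicontinuity of $\MKd$ under narrow convergence) is the same as the paper's, which cites Theorem III.1(1) of \cite{PLLTP} and Remark 6.12 of \cite{VillaniTOT2} for the same two facts; your parenthetical about loss of mass and the implicit assumption that $w_1,w_2$ are probability measures matches the reading of the paper.
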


The definitions and basic properties of T\"oplitz operators, Wigner and Husimi functions are recalled in Appendix \ref{A-Toeplitz}. 

Statement (2) in Theorem \ref{T-MKeps} implies in particular that the quantity $MK^\eps_2$ is not vanishing for all density matrices as $\eps\to 0^+$. This property is obviously essential; otherwise, the quantity $MK^\eps_2$ would not
be of much practical interest for controlling the error in the mean field limit. Statement (1) in Theorem \ref{T-MKeps} will be used in choosing the initial quantum states to which our error estimate for the mean field limit will apply.

\smallskip
The main result in this paper is the following theorem.

\begin{Thm}\lb{T-UQMF}
Let $d$ be a positive integer. For each $\eps>0$ and each integer $N>1$, let $\rho_{\eps,N}^{in}\in\cD(L^2((\bR^d)^N))$ satisfy (\ref{SymDensIn}) and let $\rho_\eps^{in}\in\cD(L^2(\bR^d))$. Let $t\mapsto\rho_{\eps,N}(t)$ be the solution 
of the quantum $N$-body Cauchy problem (\ref{NBodyQuantRho}), and let $t\mapsto\rho_\eps(t)$ be the solution of the quantum mean field Cauchy problem (\ref{HarRho}). 
Then, for each positive integer $n\le N$ and all $t\ge 0$, one has
\be\lb{MainIneq}
\ba
\frac1nMK^\eps_2(\rho_\eps(t)^{\otimes n},\rho^\mathbf{n}_{\eps,N}(t))^2\le\frac{8}N\|\grad V\|_{L^\infty}^2\frac{e^{\L t}-1}{\L}+\frac{e^{\L t}}NMK_2^\eps((\rho_\eps^{in})^{\otimes N},\rho_{\eps,N}^{in})^2
\ea
\ee
where
$$
\L:=3+4\Lip(\grad V)^2\,.
$$

In the particular case where $\rho_{\eps,N}^{in}$ is a T\"oplitz operator at scale $\eps$ with symbol $(2\pi\eps)^{dN}\mu_{\eps,N}^{in}$ while $\rho_\eps^{in}$ is a T\"oplitz operator at scale $\eps$ with symbol $(2\pi\eps)^d\mu_\eps^{in}$,
for each positive integer $n\le N$ and all $t\ge 0$, one has
\be\lb{MainIneq2}
\ba
\frac1nMK^\eps_2(\rho_\eps(t)^{\otimes n},\rho^\mathbf{n}_{\eps,N}(t))^2\le&\left(2d\eps+\frac1N\MKd((\mu_\eps^{in})^{\otimes N},\mu_{\eps,N}^{in})^2\right)e^{\L t}
\\
&+\frac{8n}N\|\grad V\|_{L^\infty}^2\frac{e^{\L t}-1}{\L}\,.
\ea
\ee
\end{Thm}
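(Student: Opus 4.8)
The plan is to construct, for every $t\ge 0$, an explicit coupling of $\rho_\eps(t)^{\otimes n}$ and $\rho^\mathbf{n}_{\eps,N}(t)$ whose ``quantum cost'' with respect to the $\sum_{k=1}^n(Q_k^*Q_k+P_k^*P_k)$ observable can be controlled by a Gr\"onwall argument, in direct analogy with Dobrushin's proof of the classical mean field limit. The starting point is to pick an optimal coupling $R_{\eps,N}^{in}\in\cQ((\rho_\eps^{in})^{\otimes N},\rho_{\eps,N}^{in})$ realizing (or nearly realizing) $MK_2^\eps((\rho_\eps^{in})^{\otimes N},\rho_{\eps,N}^{in})$, and then to evolve it by the ``doubled'' dynamics: namely the von Neumann equation on $\fH_N\otimes\fH_N$ whose Hamiltonian is $\mathcal{H}_\eps^{Hartree}\otimes I + I\otimes\mathcal{H}_{\eps,N}^{Nbody}$, where the first factor carries $N$ decoupled copies of the Hartree Hamiltonian built from $\rho_\eps(t)$, and the second factor carries the genuine $N$-body Hamiltonian. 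Since each factor of the generator is the same on both sides, the two marginals of the evolved operator $R_{\eps,N}(t)$ are exactly $\rho_\eps(t)^{\otimes N}$ and $\rho_{\eps,N}(t)$, so $R_{\eps,N}(t)\in\cQ(\rho_\eps(t)^{\otimes N},\rho_{\eps,N}(t))$ for all $t$; restricting to the first $n$ ``copies'' on each side gives an admissible coupling of $\rho_\eps(t)^{\otimes n}$ and $\rho^\mathbf{n}_{\eps,N}(t)$ by symmetry (\ref{SymDens(t)}).

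Next I would define the functional
$$
\mathcal{E}_N(t):=\frac1N\Tr_{\fH_N\otimes\fH_N}\Bigl(\sum_{k=1}^N(Q_k^*Q_k+P_k^*P_k)\,R_{\eps,N}(t)\Bigr),
$$
where $Q_k,P_k$ act on the $k$th pair of phase-space factors as in (\ref{DefPQ}), and differentiate it in time using the von Neumann equation for $R_{\eps,N}$. The kinetic part $-\tfrac12\eps\Dlt$ contributes a commutator with $Q_k^*Q_k$ that produces cross terms $Q_k\cdot P_k + P_k\cdot Q_k$ type contributions, which after a Cauchy--Schwarz / operator-inequality step are bounded by a constant times $\mathcal{E}_N(t)$ (this accounts for the ``$3$'' in $\L$; morally $Q^*Q+P^*P$ is coercive for the harmonic-oscillator-like commutator structure). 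The potential part is where the mean-field cancellation happens: the commutator of $\tfrac1{2N\eps}\sum V_{kl}$ on the $N$-body side with $P_k^*P_k$ yields $\grad V(x_k-x_l)$ terms, while the commutator of $\tfrac1\eps V_{\rho_\eps(t)}$ on the Hartree side yields $\int\grad V(x_k-z)\rho_\eps(t,z,z)\,dz$ terms; pairing the two against $Q_k$ (the displacement operator) and using $\grad V$ Lipschitz and $\rho_\eps(t)$ a probability density, the difference is bounded by $2\Lip(\grad V)$ times (something like) $\mathcal{E}_N^{1/2}$ times $\mathcal{E}_N^{1/2}$ plus a self-interaction remainder of size $O(\|\grad V\|_{L^\infty}^2/N)$ coming from the $k=l$ diagonal terms and from the discrepancy between the empirical average over the $N$-body configuration and the Hartree mean field. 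Putting this together gives the differential inequality $\mathcal{E}_N'(t)\le\L\,\mathcal{E}_N(t)+\tfrac{8}{N}\|\grad V\|_{L^\infty}^2$, and Gr\"onwall's lemma yields exactly the right-hand side of (\ref{MainIneq}) with $n=N$; the general $n\le N$ case follows because the first-$n$-copies restriction of $R_{\eps,N}(t)$ costs at most $\tfrac{n}{N}\mathcal{E}_N(t)\cdot N = n\,\mathcal{E}_N(t)/N$... more precisely $\tfrac1n$ times the restricted cost is $\le\tfrac1N\mathcal{E}_N(t)\cdot$(appropriate normalization), which is $MK_2^\eps(\rho_\eps^{\otimes n},\rho^\mathbf{n}_{\eps,N})^2/n\le\mathcal{E}_N(t)$ by exchangeability. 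Finally, (\ref{MainIneq2}) is obtained from (\ref{MainIneq}) by feeding the initial-data estimate of Theorem \ref{T-MKeps}(1), namely $MK_2^\eps((\rho_\eps^{in})^{\otimes N},\rho_{\eps,N}^{in})^2\le\MKd((\mu_\eps^{in})^{\otimes N},\mu_{\eps,N}^{in})^2+2dN\eps$ (the $2dN\eps$ being the sum of $N$ copies of the $2d\eps$ defect), into the second term on the right of (\ref{MainIneq}).

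The main obstacle I anticipate is making the differentiation of $\mathcal{E}_N(t)$ rigorous and controlling the commutators as genuine operator inequalities rather than formal manipulations: the operators $Q_k,P_k$ are unbounded, $R_{\eps,N}(t)$ need not have finite cost a priori unless one knows it is propagated, and one must justify that $R_{\eps,N}^{1/2}(Q_k^*Q_k+P_k^*P_k)R_{\eps,N}^{1/2}$ stays trace class along the flow. I would handle this either by an approximation/truncation argument (replacing $Q^*Q+P^*P$ by a bounded regularization, proving the inequality there, and passing to the limit) or by working first with sufficiently nice initial couplings (e.g. T\"oplitz with Schwartz symbols) and then extending by density and lower semicontinuity of $MK_2^\eps$. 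The other delicate point is the precise bookkeeping of the potential commutator: one must symmetrize carefully over the indices $k,l$, exploit the exchangeability (\ref{SymDens(t)}) of the $N$-body marginal and the fact that the doubled dynamics preserves the product structure on the Hartree side, and track the diagonal $k=l$ terms and the $O(1/N)$ defects so that the constant $8$ (and not something larger) appears. Everything else --- the Gr\"onwall integration, the reduction from $n=N$ to general $n$, and the substitution giving (\ref{MainIneq2}) --- is then routine.
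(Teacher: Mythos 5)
Your strategy is exactly the paper's: evolve an initial coupling by the doubled Hartree-vs-$N$-body von Neumann flow, propagate the coupling property by uniqueness of marginals, control the Lyapunov functional $D_{\eps,N}(t)=\frac1N\Tr(\sum_j(Q_j^*Q_j+P_j^*P_j)R_{\eps,N}(t))$ by Gr\"onwall, then restrict to $n$ copies and feed in the T\"oplitz upper bound for the initial cost. But two points you wave through are precisely where the paper has to work, and as written they leave genuine gaps.

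First, the reduction to general $n\le N$ and the comparison $D_{\eps,N}(0)\le\frac1NMK_2^\eps((\rho_\eps^{in})^{\otimes N},\rho_{\eps,N}^{in})^2$ both hinge on $R_{\eps,N}(t)$ being invariant under the simultaneous permutation operators $\cT_\si$ acting on both $N$-tuples; this is \emph{not} a consequence of the symmetry (\ref{SymDens(t)}) of the marginals, which you invoke, but requires the initial coupling to be symmetrized. The paper takes an arbitrary $Q_{\eps,N}^{in}\in\cQ((\rho_\eps^{in})^{\otimes N},\rho_{\eps,N}^{in})$, averages it over $\fS_N$, checks the average is still an admissible coupling (using (\ref{SymDensIn}) and the automatic $\fS_N$-invariance of $(\rho_\eps^{in})^{\otimes N}$), and shows the averaging does not change $D_{\eps,N}(0)$; only then can one minimize over $Q_{\eps,N}^{in}$. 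Without this symmetrization step, the ``exchangeability'' you rely on is unjustified.

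Second, the constant $8\|\grad V\|_{L^\infty}^2/N$ is not a generic ``self-interaction remainder'': it is the consistency error
$$
\frac1N\sum_{j=1}^N\Tr\Bigl(\bigl|\grad V_{\rho_\eps}(x_j)-\tfrac1N\sum_{k=1}^N\grad V(x_j-x_k)\bigr|^2\,R_{\eps,N}\Bigr),
$$
which the paper first reduces to an expectation against $\rho_\eps(t)^{\otimes N}$ alone (because the observable is a multiplication operator in the $x$-variables, so only the first marginal $R_{\eps,N,1}=\rho_\eps^{\otimes N}$ enters), and then bounds by the explicit combinatorial law-of-large-numbers estimate of Lemma \ref{L-CombIneq} with $p=2$, namely $\le\frac{8}{N}\|\grad V\|_{L^\infty}^2$. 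Your proposal neither performs the marginal reduction (without which one cannot compute the expectation) nor supplies a quantitative LLN; ``$O(1/N)$'' is not enough to recover (\ref{MainIneq}) with the stated constant. These two steps are the non-routine part of the proof, and you should make them explicit before claiming the theorem.
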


\smallskip
In particular, if $\rho_\eps^{in}\in\cD(\fH)$ is a T\"oplitz operator at scale $\eps$ and $\rho_{\eps,N}^{in}=(\rho_\eps^{in})^{\otimes N}$, then
\be\lb{MainIneqFact}
\frac1nMK^\eps_2(\rho_\eps(t)^{\otimes n},\rho^\mathbf{n}_{\eps,N}(t))^2\le\left(2d\eps+\frac{8}N\|\grad V\|_{L^\infty}^2\frac{1-e^{-\L t}}{\L}\right)e^{\L t}\,.
\ee
Observe that one cannot deduce the mean field limit of the quantum $N$-body problem (\ref{NBodyQuantRho}) from the bound on $MK^\eps_2(\rho_\eps(t),\rho^\mathbf{1}_{\eps,N}(t))$ obtained in Theorem \ref{T-UQMF} in the case where 
$\eps>0$ is kept fixed, because of the term $2d\eps\exp(\L t)$ on the right hand side of (\ref{MainIneq}).

On the other hand, the mean field limit alone, i.e. for $\eps$ fixed, has been proved by other methods in this case (see \cite{Spohn,BGM}). Moreover, quantitative estimates for that limit for $\eps$ fixed and $N\to\infty$ have been obtained in 
\cite{RodSch, Pickl1, Ammari}. Therefore, only the case where both $N\to\infty$ and $\eps\to 0$ remains to be treated, and the present work answers precisely this question.

Indeed, the estimate in Theorem \ref{T-UQMF}, together with the first lower bound in Theorem \ref{T-MKeps} (2), implies that the mean field limit, i.e. the convergence
$$
\rho^\mathbf{n}_{\eps,N}(t)\to\rho_\eps(t)^{\otimes n}
$$
for each $n\ge 1$ as $N\to\infty$ is uniform as $\eps\to 0$ and over long times intervals, in the following sense. Let $c,c'$ satisfy $0<c<c'<1$, and set
$$
\ba
T(c,\eta,N):=\frac{c}\L\min\left(\ln\frac1\eta,\ln N\right)\,,\quad n(c',\eta,N):=\left[\min\left(\eta^{c'-1},N^{1-c'}\right)\right]\,,
\ea
$$
(where $[x]$ designates the largest integer less than or equal to $x$). Then the quadratic Monge-Kantorovich distance between the Husimi transforms at scale $\eps$ of $\rho^\mathbf{n}_{\eps,N}(t)$ and $\rho^{\otimes n}_\eps(t)$ satisfies
$$
\sup_{1\le n\le n(c',\eta,N)}\sup_{0\le t\le T(c,\eta,N)}\sup_{0<\eps<\eta}\MKd(\tilde W_\eps[\rho_\eps(t)^{\otimes n}],\tilde W_\eps[\rho^\mathbf{n}_{\eps,N}(t)])\to 0
$$
in the limit as $\frac1N+\eta\to 0$.

Earlier works have discussed the mean field limit of the quantum $N$-body problem in the small $\eps$ regime, more precisely, in the case where $\eps=\eps(N)\to 0$ as $N\to\infty$. 

The case $\eps(N)=N^{-1/3}$ is of considerable importance for the mean field limit of systems of $N$ fermions, and has been investigated in \cite{NarnhoSewell,SpohnM2AS}. In the more recent reference \cite{BPS}, the $N$-body problem in 
the fermionic case is compared to the Hartree-Fock equations, with error estimates in the Hilbert-Schmidt and trace norms --- see formulas (2.19)-(2.20)  in Theorem 2.1 of \cite{BPS}. Of course, convergence in either Hilbert-Schmidt or trace 
norm is stronger than the control (\ref{MainIneq}) in terms of the quantity $MK^\eps_2$. On the other hand, at variance with the estimate (\ref{MainIneq}) above, whose right hand side grows exponentially fast in $t$, the bounds (2.19)-(2.20) 
in \cite{BPS} involves a right hand side growing much faster in $t$ --- specifically, of order $\exp(c_1\exp(c_2|t|))$ for some constants $c_1,c_2>0$. Another difference between \cite{BPS} and our work is that the inequality (\ref{MainIneq}) does
not postulate any dependence of $\eps$ in $N$ --- on the contrary, $\eps$ and $N$ are independent throughout the present paper.

In \cite{GMP}, for each sequence $\eps\equiv\eps(N)\to 0$ as $N\to\infty$ and each monokinetic solution of the Vlasov equation (\ref{Vlas}) --- i.e. of the form $f(t,x,\xi)=\rho(t,x)\de(\xi-u(t,x))$ --- Theorem 1.1 gives an asymptotic 
approximation rate for the convergence of the Wigner transform at scale $\eps(N)$ of $\rho^\mathbf{1}_{\eps,N}$ to $f$ in the sense of distributions for $t\in[0,T]$. A priori, the time $T$ and the convergence rate depend on the Vlasov solution 
$f$ and on the sequence $\eps(N)$. On the diagram of section \ref{SS-MFCM}, this result corresponds to the left vertical and bottom horizontal arrow along distinguished sequences $(\eps(N),N)$ over time intervals which may depend on the 
dependence of $\eps$ in terms of $N$. Another approach of the same problem can be found in \cite{PezzoPulvi}: it is proved that each term in the semiclassical expansion as $\eps\to 0$ of the quantum $N$-body problem converges as 
$N\to\infty$ to the corresponding term in the semiclassical expansion of Hartree's equation.

On the contrary, Theorem \ref{T-UQMF} provides a quantitative estimate of the distance between the solution of the Hartree equation and the first marginal of the solution of the quantum $N$-body problem, for a rather general class of 
initial data. This estimate implies that the mean field limit, i.e. the top horizontal arrow, is uniform as $\eps\to 0$, over arbitrary long time intervals. This estimate is the quantum analogue of the Dobrushin estimate \cite{Dobrush} for the 
classical mean field limit --- see section \ref{S-MFCM}.

\smallskip
The new ideas used in the proof of Theorem \ref{T-UQMF} are

\smallskip
\noindent
(a) the use of the quantity $MK^\eps_2$, which behaves well with the T\"oplitz quantization, and can be conveniently compared with the Monge-Kantorovich distance with exponent $2$ on symbols, to which it is obviously analogous;

\noindent
(b) an ``Eulerian'' version of Dobrushin's estimate, which avoids the traditional presentation in terms of particle trajectories as in Dobrushin's original work \cite{Dobrush}, and can therefore be easily adapted to the quantum dynamics;

\noindent
(c) and the adaptation of Dobrushin's estimate to the $N$-particle Liouville equation, thereby avoiding the need of any quantum analogue of the classical notion of $N$-particle empirical measure.

\smallskip
The Eulerian version of Dobrushin's inequality (item (b) on the list above) significantly simplifies the original argument, and allows extending Dobrushin's inequality to Monge-Kantorovich distances with arbitrary finite exponents (see
\cite{Loeper} for the original argument for the Monge-Kantorovich distance with exponent $2$). 

Estimating directly the Monge-Kantorovich distance between the first marginal of the $N$-particle distribution function and the solution of the mean field equation (item (c) on the list above) avoids using the fact that the $N$-particle 
empirical measure is an exact solution of the mean field equation, an important feature in Dobrushin's original approach \cite{Dobrush}. This feature is very peculiar to the mean field limit in classical Hamiltonian mechanics, and we
do not know of any quantum analogue of the notion of $N$-particle empirical measure which would exactly satisfy the mean field quantum dynamics. In other words, the mean field limit in quantum mechanics cannot be reduced to
the continuous dependence of solutions of the quantum mean field equation in terms of their initial data, in some appropriate weak topology.

\smallskip
The outline of the paper is as follows: in the next section, we present items (b)-(c) above on the mean field limit for the classical Liouville equation, leading to the Vlasov equation. The resulting estimate in Theorem \ref{T-MFC} below 
improves earlier quantitative bounds of the same type obtained in \cite{FGMouRi, MischMouWenn}. The proof of the uniform in $\eps$ estimate in Theorem \ref{T-UQMF} for the quantum mean field limit occupies section \ref{S-UQMF}. 
The properties of the quantity $MK^\eps_2$ used in this estimate, stated in Theorem \ref{T-MKeps}, are proved in section \ref{S-MKeps}. The material on T\"oplitz quantization, Wigner and Husimi functions used in the proof of Theorem 
\ref{T-MKeps} is recalled in Appendix \ref{A-Toeplitz}.

The T\"oplitz quantization is important in the analysis presented here, since it connects our main result (Theorem \ref{T-UQMF}) with the Dobrushin proof of the mean field limit for the classical $N$-body problem. For this reason, we 
dedicate our work to the memory of our friend and teacher Louis Boutet de Monvel, in recognition of his great contributions to the theory of T\"oplitz operators.

%%%%%%%%%%%%%%%%%%%%%%%%%%%%%%%%%%%%%%%%%%%%%%%%%%%%%%%%%%%%%%%%%%%%%%%%%%%%%%%%%%%%%%%%%%%%%%%%%%%%%%%%%%%%%%%%%%%%%%%%%

\section{The Mean Field Limit in Classical Mechanics}\lb{S-MFCM}

%%%%%%%%%%%%%%%%%%%%%%%%%%%%%%%%%%%%%%%%%%%%%%%%%%%%%%%%%%%%%%%%%%%%%%%%%%%%%%%%%%%%%%%%%%%%%%%%%%%%%%%%%%%%%%%%%%%%%%%%%

As a warm-up, we first discuss the mean field limit for the $N$-body problem in classical mechanics, leading to the Vlasov equation. The approach proposed in \cite{NeunWick, BraunHepp, Dobrush} is based on the fact that the phase-space
empirical measure of a $N$-particle system governed by the Newton equations of classical mechanics is a weak solution of the Vlasov equation (\ref{Vlas}). The estimate of the distance between the $N$-particle and the mean field dynamics
obtained by Dobrushin \cite{Dobrush} can be formulated in terms of propagation of chaos for the sequence of marginals of the $N$-particle distribution, as explained in \cite{FGMouRi,MischMouWenn}.

The approach proposed below bears directly on the $N$-particle distribution, i.e. the solution of the Liouville equation (\ref{NLiouv}), and avoids any reference to the $N$-particle empirical measure. Besides, the core of our argument also avoids
using particle trajectories and is based on a computation formulated exclusively in terms of Eulerian coordinates. For that reason, this approach can be adapted to the quantum problem, at variance with the Dobrushin procedure \cite{Dobrush},
also used in \cite{FGMouRi,MischMouWenn}.

For $\mu,\nu\in\cP(\bR^d)$, we denote by $\Pi(\mu,\nu)$ the set of couplings of $\mu$ et $\nu$, i.e. the set of Borel probability measures $\pi$ on $\bR^d\times\bR^d$ with first and second marginals 
\be\lb{Margi12}
\pi_{1}=\mu\quad\hbox{ and }\quad\pi_2=\nu\,.
\ee
In other words,
\be\lb{DefMargi12}
\iint_{\bR^d\times\bR^d}(\phi(x)+\psi(y))\pi(dxdy)=\int_{\bR^d}\phi(x)\mu(dx)+\int_{\bR^d}\psi(y)\nu(dy)
\ee
for each $\phi,\psi\in C_b(\bR^d)$. The identity (\ref{DefMargi12}) can be used as a definition of the first and second marginals of $\pi$ in (\ref{Margi12}). Finally, we recall the definition of the Monge-Kantorovich distance of exponent $p\ge 1$ 
on $\cP_p(\bR^d)$:
\be\lb{DefMKp}
\MKp(\mu,\nu):=\inf_{\pi\in\Pi(\mu,\nu)}\left(\iint_{\bR^d\times\bR^d}|x-y|^p\pi(dxdy)\right)^{1/p}\,.
\ee

\begin{Thm}\lb{T-MFC}
Assume that $V\in C^2_b(\bR^d)$ satisfies (\ref{EvenV}). Let $f^{in}\in\cP_p(\bR^d\times\bR^d)$ with $p\ge 1$. Let $F_N$ be the solution of the Cauchy problem (\ref{NLiouv}) for the $N$-body Liouville equation with initial data
$$
F^{in}_N:=(f^{in})^{\otimes N}\,,
$$
and let $f$ be the solution of the Cauchy problem for the Vlasov equation (\ref{Vlas}) with initial data $f^{in}$. For each integer $n$ such that $1\le n\le N$, let
$$
F^\mathbf{n}_N(t):=\int F_N(t,dy_{n+1}\ldots dy_Nd\eta_{n+1}\ldots d\eta_N)\in\cP_p(\bR^{dn}\times\bR^{dn})
$$
be the $n$-th marginal of $F_N$ (i.e. the marginal corresponding to the phase space distribution of the $n$ first particles). Then
$$
\ba
\frac1n\MKp(f(t)^{\otimes n},F^\mathbf{n}_N(t))^p\le 2^pK_p\|\grad V\|_{L^\infty}^p\frac{[p/2]+1}{N^{\min(p/2,1)}}\frac{e^{\L_pt}-1}{\L_p}\,,
\ea
$$
where $K_p:=\max(1,p-1)$ and $\L_p:=2K_p(1+2^{p-1}\Lip(\grad V)^p)$.
\end{Thm}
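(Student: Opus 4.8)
The plan is to run a Dobrushin-type Grönwall argument directly on a well-chosen coupling between the $n$-th marginal $F^{\mathbf n}_N(t)$ and the tensor power $f(t)^{\otimes n}$, computed entirely in Eulerian coordinates (item (b) of the introduction) and applied to the $N$-particle Liouville flow rather than to the empirical measure (item (c)). First I would set up the two flows on $(\bR^d\times\bR^d)^N$: the $N$-body Liouville dynamics (\ref{NLiouv}) with velocity field having $\xi_k$-component $-\tfrac1N\sum_l\grad V(x_k-x_l)$, and the ``tensorized Vlasov'' dynamics on the same space whose $k$-th block is the Vlasov field $-\int\grad V(x_k-z)f(t,z)\,dz$ acting on each factor, which by uniqueness preserves $f(t)^{\otimes N}$. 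The key quantity to propagate is
$$
D_N(t):=\frac1N\inf_{\pi}\iint\sum_{k=1}^N\big(|x_k-y_k|^2+|\xi_k-\eta_k|^2\big)^{p/2}\;\text{(or the $\ell^p$-analogue)}\;\pi(dXd\Xi\,dYd\Eta)\,,
$$
where the inf is over couplings $\pi$ of $F_N(t)$ and $f(t)^{\otimes N}$; I expect $\tfrac1n\MKp(f(t)^{\otimes n},F^{\mathbf n}_N(t))^p\le D_N(t)$ by symmetry (each marginal coupling is dominated by the full one), so it suffices to bound $D_N$.

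Next I would differentiate $D_N(t)$ along an optimal (or nearly optimal) coupling transported by the two flows — this is the Eulerian version of Dobrushin's computation, avoiding trajectories: writing the joint measure $\pi(t)$ as the push-forward under the pair of flows, $\tfrac{d}{dt}\int\Phi\,d\pi(t)$ becomes $\int(\text{field}_1\cdot\grad_{(X,\Xi)}+\text{field}_2\cdot\grad_{(Y,\Eta)})\Phi\,d\pi(t)$ with $\Phi=\tfrac1N\sum_k(|x_k-y_k|^2+|\xi_k-\eta_k|^2)^{p/2}$. The $x_k$–$\xi_k$ transport terms produce the ``$+3$''-type (for $p=2$; in general the $2K_p$) contribution by Cauchy–Schwarz / Young, as $\grad_{x_k}|x_k-y_k|^p$ pairs against $\xi_k-\eta_k$. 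The force terms require estimating
$$
\Big|\frac1N\sum_{l}\grad V(x_k-x_l)-\int\grad V(x_k-z)\,f(t,dz)\Big|\,,
$$
which I would split as (i) $\tfrac1N\sum_l[\grad V(x_k-x_l)-\grad V(y_k-y_l)]$, bounded using $\Lip(\grad V)$ by $\Lip(\grad V)(|x_k-y_k|+\tfrac1N\sum_l|x_l-y_l|)$, contributing the $\Lip(\grad V)^p$ term in $\L_p$ after a convexity/Jensen step; plus (ii) $\tfrac1N\sum_l\grad V(y_k-y_l)-\int\grad V(y_k-z)f(t,dz)$, a fluctuation term bounded in $L^\infty$ by $2\|\grad V\|_{L^\infty}$ but — crucially — with a gain in $N$ after integrating against $f(t)^{\otimes N}$, since the $y_l$ are i.i.d.\ under $f(t)^{\otimes N}$, giving a variance-type bound $O(\|\grad V\|_{L^\infty}/\sqrt N)$ in $L^2$, hence the $N^{-\min(p/2,1)}$ rate and the $2^pK_p\|\grad V\|_{L^\infty}^p([p/2]+1)$ prefactor after raising to the $p/2$ power and using $L^q$-norm interpolation (Rosenthal/Marcinkiewicz–Zygmund type moment bounds for the $p$-th moment of a sum of centered independent terms, where the $[p/2]+1$ comes from counting pair-partitions).

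Assembling, I would arrive at a differential inequality $D_N'(t)\le \L_p D_N(t)+ C_p\|\grad V\|_{L^\infty}^p\,N^{-\min(p/2,1)}$ with $\L_p=2K_p(1+2^{p-1}\Lip(\grad V)^p)$ and $C_p=2^pK_p([p/2]+1)$, and $D_N(0)=0$ since $F_N^{in}=(f^{in})^{\otimes N}$ couples perfectly to itself; Grönwall then gives exactly the stated bound. The main obstacle is step (ii): making the ``fluctuation'' estimate rigorous for general $p\ge1$ — one must control the $L^p(f(t)^{\otimes N})$ norm of $\tfrac1N\sum_l g(y_k,y_l)$ centered in $y_l$, uniformly in $y_k$, which for $p\ge2$ needs a moment inequality producing precisely the combinatorial factor $[p/2]+1$ and the $N^{-p/2}$ decay, and for $1\le p<2$ needs the weaker $N^{-1}$-type bound via $\|\cdot\|_{L^p}\le\|\cdot\|_{L^2}$; a secondary technical point is justifying differentiation under the integral and the use of a genuinely optimal coupling (or an $\eta$-optimal one, then letting $\eta\to0$), which requires the $C^2_b$ regularity of $V$ to guarantee well-posed, measure-preserving, Lipschitz flows for both dynamics.
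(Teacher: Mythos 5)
Your proposal is correct and follows essentially the same route as the paper: propagate a coupling of $f(t)^{\otimes N}$ and $F_N(t)$ under the joint mean-field/$N$-body flow (Lemma \ref{L-DynCoupl}), split the force error into a Lipschitz piece and a consistency piece evaluated in the variables carrying the marginal $f(t)^{\otimes N}$, bound that consistency piece by a law-of-large-numbers moment estimate (Lemma \ref{L-CombIneq}, which packages the Rosenthal/pair-partition counting you invoke), then Gr\"onwall with the diagonal initial coupling so $D_N(0)=0$, and pass to the $n$-particle marginal via permutation symmetry. The only cosmetic differences from the paper are the roles of $(X,\Xi)$ and $(Y,\Eta)$ being swapped, the use of $\big(|x_k-y_k|^2+|\xi_k-\eta_k|^2\big)^{p/2}$ in place of the paper's $|x_k-y_k|^p+|\xi_k-\eta_k|^p$, and your framing in terms of an optimal coupling at each time --- the paper avoids any optimality question by simply propagating the diagonal coupling, which is all that is needed for an upper bound on $\MKp$.
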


\smallskip
Observe that one obtains an estimate of $\MKp(f(t)^{\otimes n},F^\mathbf{n}_N(t))$ of order $O(N^{-1/2})$ if $1\le p\le 2$, corresponding to the rate predicted by the central limit theorem. 

\medskip
\begin{proof} The proof of Theorem \ref{T-MFC} occupies the remaining part of the present section.

\subsection{The dynamics of couplings}

%%%%%%%%%%%%%%%%%%%%%%%%%%%%%%%%%%%%%%%%%%%%%%%%%%%%%%%%%%%%%%%%%%%%%%%%%%%%%%%%%%%%%%%%%%%%%%%%%%%%%%%%%%%%%%%%%%%%%%%

Let $\pi_N^{in}\in\Pi((f^{in})^{\otimes N},(f^{in})^{\otimes N})$ satisfy
\be\lb{SympiN0}
T_\si\#\pi_N^{in}=\pi_N^{in}\,,\qquad\hbox{ for each }\si\in\fS_N\,,
\ee
where 
$$
\ba
T_\si(x_1,\xi_1,\ldots,x_N,\xi_N,y_1,\eta_1,\ldots,y_N,\eta_N)&
\\
=(x_{\si(1)},\xi_{\si(1)},\ldots,x_{\si(N)},\xi_{\si(N)},y_{\si(1)},\eta_{\si(1)},\ldots,y_{\si(N)},\eta_{\si(N)})&\,.
\ea
$$
It will be convenient to use the following notation
\be\lb{DefXNXiN}
\ba
X_N:=(x_1,\ldots,x_N)\,,\quad\Xi_N:=(\xi_1,\ldots,\xi_N)\,,
\\
Y_N:=(y_1,\ldots,y_N)\,,\quad H_N:=(\eta_1,\ldots,\eta_N)\,.
\ea
\ee

Let $f$ be the solution of the Cauchy problem (\ref{Vlas}) with initial data $f^{in}$, and let
$$
\bH_N^{\rho[f]}(X_N,\Xi_N):=\sum_{j=1}^N\left(\tfrac12|\xi_j|^2+V_{\rho[f]}(x_j)\right)
$$
be the mean field Hamiltonian. On the other hand, let
$$
\cH_N(Y_N,H_N):=\sum_{k=1}^N\tfrac12|\eta_k|^2+\frac1{2N}\sum_{k,l=1}^NV(y_k-y_l)
$$
be the microscopic ($N$-particle) Hamiltonian. Finally, we denote by $\{\cdot,\cdot\}_N$ the Poisson bracket on $(\bR^d\times\bR^d)^N$ defined by 
$$
\{\phi,\psi\}_N:=\sum_{j=1}^N(\grad_{\xi_j}\phi\cdot\grad_{x_j}\psi-\grad_{\xi_j}\psi\cdot\grad_{x_j}\phi)\,.
$$

The following observation is the key to the Eulerian formulation of the Dobrushin type estimates, which we shall adapt to the quantum case.

\begin{Lem}\lb{L-DynCoupl}
Let $t\mapsto\pi_N(t)\in\cP_2((\bR^d\times\bR^d)^N\times(\bR^d\times\bR^d)^N)$ be the solution of the Cauchy problem
$$
\left\{
\ba
{}&\d_t\pi_N+\{\bH_N^{\rho[f(t)]}(X_N,\Xi_N)+\cH_N(Y_N,H_N),\pi_N\}_{2N}=0\,,
\\
&\pi_N\rstr_{t=0}=\pi_N^{in}\,.
\ea
\right.
$$
Then
$$
\pi_N(t)\in\Pi(f(t)^{\otimes N};F_N(t))\,,\quad\hbox{ for each }t\in\bR\,,
$$
and
$$
T_\si\pi_N(t)=\pi_N(t)\quad\hbox{ for each }t\in\bR\hbox{ and each }\si\in\fS_N\,.
$$
\end{Lem}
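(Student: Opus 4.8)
The plan is to exploit the product structure of the Hamiltonian driving the transport equation: $\bH_N^{\rho[f(t)]}(X_N,\Xi_N)$ depends only on the first block of phase-space variables $(X_N,\Xi_N)$, while $\cH_N(Y_N,H_N)$ depends only on the second block $(Y_N,H_N)$. Hence, with respect to $\{\cdot,\cdot\}_{2N}$ on $(\bR^d\times\bR^d)^N\times(\bR^d\times\bR^d)^N$, the associated Hamiltonian vector field splits as a direct sum, and the characteristic flow of the Cauchy problem factors as a product $\Phi_t\times\Psi_t$, where $\Phi_t$ is the flow (from time $0$ to time $t$) generated by the time-dependent Hamiltonian $\bH_N^{\rho[f(t)]}$ on the first block and $\Psi_t$ the (autonomous) flow generated by $\cH_N$ on the second block. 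Since $V\in C^2_b(\bR^d)$, both $\grad V$ and the effective force $\grad V_{\rho[f(t)]}(x)=\int\grad V(x-z)\rho[f](t,z)\,dz$ are bounded and globally Lipschitz in $x$ uniformly in $t$; thus both flows are globally defined by Cauchy--Lipschitz, have at most linear growth in phase space — so they propagate the finite second moments required in the statement — and the measure-valued solution of the transport Cauchy problem is unique and equal to $\pi_N(t)=(\Phi_t\times\Psi_t)\#\pi_N^{in}$.

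Next I would identify each factor flow with the dynamics it drives. On the first block, $\bH_N^{\rho[f(t)]}=\sum_{j=1}^Nh_t(x_j,\xi_j)$ with $h_t(x,\xi):=\tfrac12|\xi|^2+V_{\rho[f(t)]}(x)$, so $\Phi_t$ is the $N$-fold product of the one-particle flow $\varphi_t$ generated by $h_t$. Computing $\{h_t,\cdot\}_1$ shows that the transport equation for $\varphi_t$ is exactly the linear, time-dependent equation solved by the Vlasov solution $f$; equivalently, saying that $f$ solves (\ref{Vlas}) is precisely saying that $\varphi_t\#f^{in}=f(t)$. Therefore $\Phi_t\#(f^{in})^{\otimes N}=(\varphi_t\#f^{in})^{\otimes N}=f(t)^{\otimes N}$. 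On the second block, computing $\{\cH_N,\cdot\}_N$ and using the evenness (\ref{EvenV}) of $V$ (so that $\grad V$ is odd) to collect the two halves of $\grad_{y_j}(\tfrac1{2N}\sum_{k,l}V(y_k-y_l))$, one recovers exactly the $N$-body Liouville equation (\ref{NLiouv}); hence $\Psi_t\#F_N^{in}=\Psi_t\#(f^{in})^{\otimes N}=F_N(t)$.

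The marginal statement is now immediate: pushing a coupling forward by a product map pushes each marginal forward by the corresponding factor, so the first marginal of $\pi_N(t)=(\Phi_t\times\Psi_t)\#\pi_N^{in}$ is $\Phi_t\#(f^{in})^{\otimes N}=f(t)^{\otimes N}$ and the second is $\Psi_t\#(f^{in})^{\otimes N}=F_N(t)$, i.e. $\pi_N(t)\in\Pi(f(t)^{\otimes N};F_N(t))$. For the permutation symmetry, note that $T_\si$ is the product of the index-permutation action on the first block with that on the second block. Since $\bH_N^{\rho[f(t)]}$ is a symmetric sum of identical one-particle Hamiltonians and $\cH_N$ is invariant under permutations of $(y_1,\eta_1),\ldots,(y_N,\eta_N)$, each factor flow commutes with the corresponding permutation action; hence $\Phi_t\times\Psi_t$ commutes with $T_\si$, and $T_\si\#\pi_N(t)=(\Phi_t\times\Psi_t)\#(T_\si\#\pi_N^{in})=(\Phi_t\times\Psi_t)\#\pi_N^{in}=\pi_N(t)$ by (\ref{SympiN0}).

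The only point that is more than bookkeeping — and where I would expect the single real subtlety to lie — is the self-consistency used on the first block: one must know that the Vlasov solution $f$ is genuinely transported by the characteristic flow of its own mean field Hamiltonian, i.e. that $f(t)=\varphi_t\#f^{in}$ with $\varphi_t$ built from $V_{\rho[f(t)]}$. For $C^2_b$ potentials this is standard: once $f$ is known, $t\mapsto\grad V_{\rho[f(t)]}$ is a prescribed, bounded, Lipschitz-in-$x$ force field, so (\ref{Vlas}) becomes a plain linear transport equation whose unique measure solution is the pushforward along the associated flow. It is worth recording this explicitly, since everything else in the lemma is a formal consequence of the split structure of the Hamiltonian $\bH_N^{\rho[f(t)]}+\cH_N$.
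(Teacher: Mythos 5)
Your proof is correct, but it takes a genuinely different route from the paper's: you construct the solution explicitly as the pushforward $\pi_N(t)=(\Phi_t\times\Psi_t)\#\pi_N^{in}$ along the product of the two factor flows, identify $\Phi_t\#(f^{in})^{\otimes N}=f(t)^{\otimes N}$ via the Vlasov self-consistency and $\Psi_t\#(f^{in})^{\otimes N}=F_N(t)$ via the $N$-body characteristics, and then read off both the marginal property and the $\fS_N$-symmetry from the equivariance of the product flow. This is a clean Lagrangian argument. The paper, by contrast, never introduces a flow: it differentiates $\int\phi\,d\pi_{N,1}$ and $\int\psi\,d\pi_{N,2}$ in time, exploits the vanishing of the cross Poisson brackets $\{\cH_N(Y_N,H_N),\phi(X_N,\Xi_N)\}_{2N}=0$ and $\{\bH_N^{\rho[f(t)]}(X_N,\Xi_N),\psi(Y_N,H_N)\}_{2N}=0$ to show that each marginal solves its own Liouville equation, checks that $f(t)^{\otimes N}$ and $F_N(t)$ solve those same equations with the same initial data, and concludes by uniqueness for linear transport with Lipschitz coefficients; the symmetry is handled the same way by showing $\pi_N-T_\si\#\pi_N$ solves the transport equation with zero initial data. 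The distinction is not pedantic: the paper explicitly advertises the Eulerian formulation (item (b) in the list of new ideas) as what allows the argument to be transplanted to the von Neumann equation, where there is no notion of characteristic trajectory. Your flow-based proof is shorter and more transparent for this purely classical lemma, and your remark isolating the one nontrivial input --- that the Vlasov solution $f$ is transported by the characteristics of its own mean-field Hamiltonian once the force field is frozen --- is exactly the point the paper buries in the phrase ``an elementary computation shows that $\d_tf^{\otimes N}+\{\bH_N^{\rho[f(t)]},f^{\otimes N}\}_N=0$.'' But your method would not survive the passage to Section~\ref{S-UQMF}, whereas the paper's does, essentially verbatim.
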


\begin{proof}
Let $\phi\equiv\phi(X_N,\Xi_N)$ and $\psi\equiv\psi(Y_N,H_N)\in C^\infty_c((\bR^d\times\bR^d)^N)$ be two test functions. Then
$$
\ba
\frac{d}{dt}\iint\phi(X_N,\Xi_N)\pi_{N,1}(t,dX_Nd\Xi_N)
\\
=\frac{d}{dt}\iint\iint\phi(X_N,\Xi_N)\pi_N(t,dX_Nd\Xi_NdY_NdH_N)
\\
=\iint\iint\{\bH_N^{\rho[f(t)]},\phi\}_N(X_N,\Xi_N)\pi_{N}(t,dX_Nd\Xi_NdY_NdH_N)
\\
=\iint\iint\{\bH_N^{\rho[f(t)]},\phi\}_N(X_N,\Xi_N)\pi_{N,1}(t,dX_Nd\Xi_N)
\ea
$$
since
$$
\{\cH_N(Y_N,H_N),\phi(X_N,\Xi_N)\}_{2N}=0\,.
$$
The penultimate chain of equalities shows that the first marginal $\pi_{N,1}$ of $\pi_N$ corresponding to the phase space variables $(X_N,\Xi_N)$ is a solution of the equation
$$
\d_t\pi_{N,1}+\{\bH_N^{\rho[f(t)]},\pi_{N,1}\}_N=0\,.
$$
On the other hand, an elementary computation shows that the solution $f$ of (\ref{Vlas}) satisfies
$$
\d_tf^{\otimes N}+\{\bH_N^{\rho[f(t)]},f^{\otimes N}\}_N=0\,.
$$
Since $\pi_{N,1}$ and $f^{\otimes N}$ are solutions of the same Liouville equation and 
$$
\pi_{N,1}(0)=(f^{in})^{\otimes N}=f(0)^{\otimes N}\,,
$$
we conclude from the uniqueness of the solution of the Cauchy problem for a transport equation with Lipschitz continuous coefficients that $\pi_{N,1}(t)=f(t)^{\otimes N}$ for all $t\ge 0$.

Similarly
$$
\ba
\frac{d}{dt}\iint\psi(Y_N,H_N)\pi_{N,2}(t,dY_NdH_N)&
\\
=\frac{d}{dt}\iint\iint\psi(Y_N,H_N)\pi_N(t,dX_Nd\Xi_NdY_NdH_N)&
\\
=\iint\iint\{\cH_N,\psi\}_N(Y_N,H_N)\pi_{N}(t,dX_Nd\Xi_NdY_NdH_N)&
\\
=\iint\iint\{\cH_N,\psi\}_N(Y_N,H_N)\pi_{N,2}(t,dY_NdH_N)&\,,
\ea
$$
since
$$
\{\bH_N^{\rho[f(t)]}(X_N,\Xi_N),\psi(Y_N,H_N)\}_{2N}=0\,.
$$
This shows that the second marginal $\pi_{N,2}$, corresponding to the phase space variables $(Y_N,H_N)$, is a solution to the same Liouville equation (\ref{NLiouv}) as $F_N$. Since
$$
\pi_{N,2}(0)=(f^{in})^{\otimes N}=F_N(0)\,,
$$
we conclude that $\pi_{N,2}(t)=F_N(t)$ for each $t\ge 0$, by uniqueness of the solution of (\ref{NLiouv}).

Finally the time-dependent Hamiltonian
$$
\bh_f:\,(X_N,\Xi_N,Y_N,H_N)\mapsto\bH_N^{\rho[f(t)]}(X_N,\Xi_N)+\cH_N(Y_N,H_N)
$$
satisfies
$$
\bh_f\circ T_\si=\bh_f\quad\hbox{ for all }\si\in\fS_N\,.
$$
Hence
$$
\d_t(\pi_N-T_\si\#\pi_N)(t)+\{\bh_f,(\pi_N-T_\si\#\pi_N)(t)\}_{2N}=0\,.
$$
Since $(\pi_N-T_\si\#\pi_N)(0)$ by (\ref{SympiN0}), we conclude that
$$
\pi_N(t)=T_\si\#\pi_N(t)
$$
for all $t\ge 0$, by uniqueness of the solution of the Cauchy problem for the Liouville equation with Hamiltonian $\bh_f$.
\end{proof}

\subsection{The Eulerian variant of the Dobrushin estimate}

%%%%%%%%%%%%%%%%%%%%%%%%%%%%%%%%%%%%%%%%%%%%%%%%%%%%%%%%%%%%%%%%%%%%%%%%%%%%%%%%%%%%%%%%%%%%%%%%%%%%%%%%%%%%%%%%%%%%%%%

Set
$$
D^p_N(t):=\int\frac1N\sum_{j=1}^N(|x_j-y_j|^p+|\xi_j-\eta_j|^p)\pi_N(t,dX_Nd\Xi_NdY_NdH_N)\,.
$$
We recall that $p\ge 1$. Then
$$
\ba
\frac{dD^p_N}{dt}=&\int\left\{\bH_N^{\rho[f(t)]}(X_N,\Xi_N),\frac1N\sum_{j=1}^N(|x_j-y_j|^p+|\xi_j-\eta_j|^p)\right\}_{2N}d\pi_N(t)
\\
&+\int\left\{\cH_N(Y_N,H_N),\frac1N\sum_{j=1}^N(|x_j-y_j|^p+|\xi_j-\eta_j|^p)\right\}_{2N}d\pi_N(t)\,.
\ea
$$

First
$$
\ba
\left\{\bH_N^{\rho[f(t)]}(X_N,\Xi_N),\frac1N\sum_{j=1}^N(|x_j-y_j|^p+|\xi_j-\eta_j|^p)\right\}_{2N}
\\
=
\frac1N\sum_{j=1}^N\{\tfrac12|\xi_j|^2,|x_j-y_j|^p\}_{2N}
+
\frac1N\sum_{j=1}^N\{V_{\rho[f]}(x_j),|\xi_j-\eta_j|^p\}_{2N}
\\
=\frac{p}N\sum_{j=1}^N\xi_j\cdot(x_j-y_j)|x_j-y_j|^{p-2}
-
\frac{p}N\sum_{j=1}^N\grad V_{\rho[f]}(x_j)\cdot(\xi_j-\eta_j)|\xi_j-\eta_j|^{p-2}\,,
\ea
$$
while
$$
\ba
\left\{\cH_N(Y_N,H_N),\frac1N\sum_{j=1}^N(|x_j-y_j|^p+|\xi_j-\eta_j|^p)\right\}_{2N}
\\
=
\frac1N\sum_{j=1}^N\{\tfrac12|\eta_j|^2,|x_j-y_j|^p\}_{2N}
+
\frac1{N^2}\sum_{j,k=1}^N\{V(y_j-y_k),|\xi_j-\eta_j|^p\}_{2N}
\\
=\frac{p}N\sum_{j=1}^N\eta_j\cdot(y_j-x_j)|y_j-x_j|^{p-2}
-
\frac{p}{N^2}\sum_{j,k=1}^N\grad V(y_j-y_k)\cdot(\eta_j-\xi_j)|\eta_j-\xi_j|^{p-2}\,.
\ea
$$
Therefore
$$
\ba
\frac{dD^p_N}{dt}=\frac{p}N\sum_{j=1}^N\int(\xi_j-\eta_j)\cdot(x_j-y_j)|x_j-y_j|^{p-2}d\pi_N&
\\
-\frac{p}N\sum_{j=1}^N\int(\xi_j-\eta_j)\cdot\left(\grad V_{\rho[f]}(x_j)-\frac1{N}\sum_{k=1}^N\grad V(y_j-y_k)\right)|\xi_j-\eta_j|^{p-2}d\pi_N&\,.
\ea
$$
At this point we use Young's inequality in the form
$$
pab^{p-1}\le a^p+(p-1)b^p\le\max(1,p-1)(a^p+b^p)
$$
for each $a,b>0$ and each $p\ge 1$. Denoting
$$
K_p:={\max(1,p-1)}\,,
$$
one has
$$
\ba
\frac{dD^p_N}{dt}&\le\frac{K_p}N\sum_{j=1}^N\int(|\xi_j-\eta_j|^p+|x_j-y_j|^p)d\pi_N+\frac{K_p}N\sum_{j=1}^N\int|\xi_j-\eta_j|^pd\pi_N
\\
&+\frac{K_p}N\sum_{j=1}^N\int\left|\grad V_{\rho[f]}(x_j)-\frac1{N}\sum_{k=1}^N\grad V(y_j-y_k)\right|^pd\pi_N
\\
&\le2K_pD^p_N+\frac{K_p}N\sum_{j=1}^N\int\left|\grad V_{\rho[f]}(x_j)-\frac1{N}\sum_{k=1}^N\grad V(y_j-y_k)\right|^pd\pi_N\,.
\ea
$$

Let us decompose this last term as follows
$$
\ba
\grad V_{\rho[f]}(x_j)-\frac1{N}\sum_{k=1}^N\grad V(y_j-y_k)
&=
\grad V_{\rho[f]}(x_j)-\frac1{N}\sum_{k=1}^N\grad V(x_j-x_k)
\\
&+
\frac1{N}\sum_{k=1}^N\left(\grad V(x_j-x_k)-\grad V(y_j-y_k)\right)\,,
\ea
$$
so that, by convexity of the function $z\mapsto z^p$ on $(0,\infty)$ for $p\ge 1$,
$$
\ba
\left|\grad V_{\rho[f]}(x_j)-\frac1{N}\sum_{k=1}^N\grad V(y_j-y_k)\right|^p&
\\
\le
2^{p-1}\left|\grad V_{\rho[f]}(x_j)-\frac1{N}\sum_{k=1}^N\grad V(x_j-x_k)\right|^p&
\\
+2^{p-1}
\left|\frac1{N}\sum_{k=1}^N(\grad V(x_j-x_k)-\grad V(y_j-y_k))\right|^p&\,.
\ea
$$
Then, by the same convexity argument as above
$$
\ba
\left|\frac1{N}\sum_{k=1}^N(\grad V(x_j-x_k)-\grad V(y_j-y_k))\right|^p&
\\
\le\frac1{N}\sum_{k=1}^N\left|\grad V(x_j-x_k)-\grad V(y_j-y_k)\right|^p&
\\
\le\frac{\Lip(\grad V)^p}{N}\sum_{k=1}^N|(x_j-x_k)-(y_j-y_k)|^p&
\\
\le\frac{2^{p-1}\Lip(\grad V)^p}{N}\sum_{k=1}^N(|x_j-y_j|^p+|x_k-y_k|^p)&\,.
\ea
$$
Hence
$$
\ba
\frac{dD^p_N}{dt}\le 2K_pD^p_N+\frac{2^{p-1}K_p\Lip(\grad V)^p}{N^2}\sum_{j,k=1}^N\int(|x_j-y_j|^p+|x_k-y_k|^p)d\pi_N&
\\
+\frac{2^{p-1}K_p}N\sum_{j=1}^N\int\left|\grad V_{\rho[f]}(x_j)-\frac1{N}\sum_{k=1}^N\grad V(x_j-x_k)\right|^pd\pi_N&\,.
\ea
$$
Since
$$
\frac1{N^2}\sum_{j,k=1}^N\int(|x_j-y_j|^p+|x_k-y_k|^p)d\pi_N=\frac2{N}\sum_{l=1}^N\int|x_l-y_l|^pd\pi_N\le 2D^p_N\,,
$$
the inequality above can be recast as
$$
\ba
\frac{dD^p_N}{dt}\le\L_pD^p_N+\frac{2^{p-1}K_p}N\sum_{j=1}^N\int\left|\grad V_{\rho[f]}(x_j)-\frac1{N}\sum_{k=1}^N\grad V(x_j-x_k)\right|^pd\pi_N&\,,
\ea
$$
with
$$
\L_p:=2K_p(1+2^{p-1}\Lip(\grad V)^p)\,.
$$

\subsection{Controlling the consistency error}
%%%%%%%%%%%%%%%%%%%%%%%%%%%%%%%%%%%%%%%%%%%%%%%%%%%%%%%%%%%%%%%%%%%%%%%%%%%%%%%%%%%%%%%%%%%%%%%%%%%%%%%%%%%%%%%%%%%%%%%%

Let us examine the last term on the right hand side of this inequality. Since $\pi_N(t)\in\Pi(f(t)^{\otimes N},F_N(t))$ by Lemma \ref{L-DynCoupl}
$$
\ba
\int&\left|\grad V_{\rho[f]}(x_j)-\frac1{N}\sum_{k=1}^N\grad V(x_j-x_k)\right|^pd\pi_N
\\
&=
\int\left|\grad V_{\rho[f]}(x_j)-\frac1{N}\sum_{k=1}^N\grad V(x_j-x_k)\right|^p\rho[f]^{\otimes N}(t,dX_N)\,.
\ea
$$
Observe that this term involves only the factorized distribution $f^{\otimes N}$. This is the ``consistency'' error in the sense of numerical analysis. In other words, it measures by how much $f^{\otimes N}$ fails to be an exact solution of the
$N$-body Liouville equation (\ref{NLiouv}). Eventually we arrive at the inequality
\be\lb{Gronw<}
\ba
\frac{dD^p_N}{dt}\le\L_pD^p_N&
\\
+\frac{2^{p-1}K_p}N\sum_{j=1}^N\int\left|\grad V_{\rho[f]}(x_j)-\frac1{N}\sum_{k=1}^N\grad V(x_j-x_k)\right|^p\rho[f]^{\otimes N}(t,dX_N)&\,.
\ea
\ee

The last term on the right hand side of (\ref{Gronw<}) is mastered by the following inequality\footnote{We thank one of the referees who suggested using this argument instead of the error estimates in terms of Monge-Kantovich distances
for the law of large numbers obtained by Fournier and Guillin \cite{FourGuil}.}.

\begin{Lem}\lb{L-CombIneq}
Let $F$ be a bounded vector field on $\bR^d$, and $\rho$ be a probability density on $\bR^d$. For each $p>0$ and each $j=1,\ldots,N$, one has
$$
\int\left|F\star\rho(x_j)-\frac1N\sum_{k=1}^NF(x_j-x_k)\right|^{p}\prod_{m=1}^N\rho(x_m)dx_m
\le
\frac{2[p/2]+2}{N^{\min(p/2,1)}}(2\|F\|_{L^\infty})^{p}\,.
$$
\end{Lem}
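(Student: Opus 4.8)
The plan is to read the integral, for fixed $j$, as a $p$-th moment of a sum of independent bounded mean-zero random vectors together with one bounded deterministic term, after freezing the value of $x_j$. Write $b:=2\|F\|_{L^\infty}$ and $\phi(x,y):=F\star\rho(x)-F(x-y)$, so that $F\star\rho(x_j)-\tfrac1N\sum_{k=1}^NF(x_j-x_k)=\tfrac1N\sum_{k=1}^N\phi(x_j,x_k)$. Two features of $\phi$ are used: $\|\phi\|_{L^\infty}\le b$, since $|F\star\rho|\le\|F\|_{L^\infty}$; and $\int_{\bR^d}\phi(x,y)\rho(y)\,dy=0$ for every $x$, which is just the definition of the convolution $F\star\rho$. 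Keeping $x_j$ fixed, split $\sum_{k=1}^N\phi(x_j,x_k)=A+S$, where $A:=\phi(x_j,x_j)$ is a constant vector with $|A|\le b$ and $S:=\sum_{k\ne j}\phi(x_j,x_k)$; regarded as a function of the remaining variables $x_k$ with $k\ne j$, which are independent with common law $\rho\,dx$, the term $S$ is a sum of $N-1$ independent, mean-zero vectors of norm $\le b$.

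First I would record the second-moment estimate: the cross term $A\cdot S$ integrates to $0$, and by independence $\int|A+S|^2\prod_{k\ne j}\rho(x_k)\,dx_k=|A|^2+\sum_{k\ne j}\int|\phi(x_j,x_k)|^2\rho(x_k)\,dx_k\le Nb^2$. Then I would interpolate to the $p$-th moment. For $0<p\le2$, Jensen's inequality applied to the concave function $t\mapsto t^{p/2}$ gives $\int|A+S|^p\le(Nb^2)^{p/2}=N^{p/2}b^p$; for $p\ge2$, the crude deterministic bound $|A+S|\le Nb$ (a sum of $N$ vectors of norm $\le b$) combined with the second-moment bound gives $\int|A+S|^p\le(Nb)^{p-2}\,Nb^2=N^{p-1}b^p$. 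Dividing by $N^p$ and integrating in $x_j$ against $\rho$ then bounds the left-hand side of the lemma by $N^{-\min(p/2,1)}b^p$, which is in fact stronger than the asserted bound, since $2[p/2]+2\ge2$.

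There is no genuine obstacle here; the one point needing explicit care is the ``diagonal'' term $k=j$ in the sum, which is bounded but not mean-zero. Isolating it as $A$ is what forces the rate $N^{-1}$ (rather than the central-limit rate $N^{-p/2}$) for $p\ge2$ — which is exactly what the statement claims. A more elaborate route, which would reproduce a constant of the stated shape $2[p/2]+2$ rather than the sharper $1$ obtained above, is to expand $\int\big|\sum_k\phi(x_j,x_k)\big|^{2[p/2]}$, discard every multi-index in which some off-diagonal value has multiplicity one (using $\int\phi(x_j,x_k)\rho(x_k)\,dx_k=0$), count the $O(N^{[p/2]})$ surviving terms, and interpolate for non-integer $p$; this is longer and not needed.
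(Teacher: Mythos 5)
Your proof is correct, and it takes a genuinely simpler route than the paper's for $p\ge 2$. Both arguments start the same way: for $p<2$ they apply Jensen's inequality to reduce to $p=2$, and both exploit the identity $\int\phi(x_j,x_k)\rho(x_k)\,dx_k=0$ (mean-zero) together with independence of the $x_k$'s. Where you diverge is the case $p\ge 2$. The paper handles even integer $p$ by fully expanding $\bigl|\sum_k\phi(x_j,x_k)\bigr|^p$ into a sum over $p$-tuples $g\in\{1,\dots,N\}^{\{1,\dots,p\}}$, classifying each tuple by whether some index other than $j$ appears with multiplicity exactly one (in which case that term vanishes), and counting the $N^p-(N-1)^p$ surviving terms; the non-even $p\ge 2$ case is then obtained by H\"older interpolation between consecutive even integers, which produces the $2[p/2]+2$ shape of the constant. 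Your argument replaces all of this with the pointwise bound $|A+S|\le Nb$ and the elementary observation that $|A+S|^p\le (Nb)^{p-2}|A+S|^2$, so only a single exact second-moment computation is ever needed. This shortcut is not only shorter but also strictly sharper: it yields the constant $1$ rather than $2[p/2]+2$, uniformly in $p$.

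Two small points to make explicit if you write this up. First, the diagonal term $A=\phi(x_j,x_j)$ is indeed the obstruction to the central-limit rate $N^{-p/2}$ for $p\ge 2$, as you note; the mean-zero property only applies to $\phi(x_j,x_k)$ for $k\ne j$ since $x_j$ is frozen. Second, the step $\int|A+S|^2=|A|^2+\int|S|^2$ uses that the cross term $2A\cdot\int S\,\prod_{k\ne j}\rho(x_k)\,dx_k$ vanishes, which follows from $\int S\,\prod_{k\ne j}\rho\,dx_k=\sum_{k\ne j}\int\phi(x_j,x_k)\rho(x_k)\,dx_k=0$; this is implicit in your write-up and worth a sentence.
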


The proof of Lemma \ref{L-CombIneq} is deferred until Appendix \ref{A-CombIneq}. Inserting the bound in Lemma \ref{L-CombIneq} in the right hand side of the differential inequality (\ref{Gronw<}) leads to
$$
\ba
\frac{dD^{p}_N}{dt}\le\L_{p}D^{p}_N+2^{2p}K_{p}\frac{[p/2]+1}{N^{\min(p/2,1)}}\|\grad V\|_{L^\infty}^{p}\,.
\ea
$$
Gronwall's inequality implies that
\be\lb{IneqD}
D^{p}_N(t)\le D^{p}_N(0)e^{\L_{p}t}+2^{2p}K_{p}\|\grad V\|_{L^\infty}^{p}\frac{[p/2]+1}{N^{\min(p/2,1)}}\frac{e^{\L_{p}t}-1}{\L_{p}}\,.
\ee

\subsection{End of the proof of Theorem \ref{T-MFC}}
%%%%%%%%%%%%%%%%%%%%%%%%%%%%%%%%%%%%%%%%%%%%%%%%%%%%%%%%%%%%%%%%%%%%%%%%%%%%%%%%%%%%%%%%%%%%%%%%%%%%%%%%%%%%%%%%%%%%%%%%

By Lemma \ref{L-DynCoupl}, 
$$
T_\si\#\pi_N(t)=\pi_N(t)
$$
for each $t\ge 0$ and each permutation $\si\in\fS_N$. Thus, for each $k=1,\ldots,N$, one has
$$
\ba
\int(|x_k-y_k|^p+|\xi_k-\eta_k|^p)\pi_N(t,dX_Nd\Xi_NdY_NdH_N)&
\\
=
\int(|x_1-y_1|^p+|\xi_1-\eta_1|^p)\pi_N(t,dX_Nd\Xi_NdY_NdH_N)&\,,
\ea
$$
so that
$$
\ba
D^p_N(t)=\int(|x_j-y_j|^p+|\xi_j-\eta_j|^p)\pi_N(t,dX_Nd\Xi_NdY_NdH_N)
\\
=\frac1n\int\sum_{j=1}^n(|x_j-y_j|^p+|\xi_j-\eta_j|^p)\pi_N(t,dX_Nd\Xi_NdY_NdH_N)
\ea
$$

Denoting 
\be\lb{DefXNk}
X_N^k:=(x_k,\ldots,x_N)\,,\quad\Xi_N^k:=(\xi_k,\ldots,\xi_N)
\ee
for each $k=1,\ldots N$, we set
$$
\pi^\mathbf{n}_N(t,dX_nd\Xi_ndY_ndH_n):=\int\pi_N(t,dX_N^nd\Xi_N^ndY_N^ndH_N^n)\,.
$$
By Fubini's theorem, for each $\phi\in C_b((\bR^{2d})^n)$, one has
$$
\ba
\int\phi(X_n,\Xi_n)\pi^\mathbf{n}_N(t,dX_nd\Xi_ndY_ndH_n)&=\int\phi(X_n,\Xi_n)\pi_N(t,dX_Nd\Xi_NdY_NdH_N)
\\
&=\int\phi(X_n,\Xi_n)f(t)^{\otimes N}(dX_Nd\Xi_N)
\\
&=\int\phi(X_n,\Xi_n)f(t)^{\otimes n}(dX_nd\Xi_n)\,,
\ea
$$
while
$$
\ba
\int\phi(Y_n,H_n)\pi^\mathbf{n}_N(t,dX_nd\Xi_ndY_ndH_n)&=\int\phi(Y_n,H_n)\pi_N(t,dX_Nd\Xi_NdY_NdH_N)
\\
&=\int\phi(Y_n,H_n)F_N(t,dY_NdH_N)
\\
&=\int\phi(Y_n,H_n)F_N^\mathbf{n}(t,dY_ndH_n)\,,
\ea
$$
so that
$$
\pi^\mathbf{n}_N(t)\in\Pi(f(t)^{\otimes n},F^\mathbf{n}_N(t))\quad\hbox{ for each }t\ge 0\,.
$$

Therefore
\be\lb{MinDNp}
\ba
D^p_N(t)&=\frac1n\int\sum_{j=1}^n(|x_j-y_j|^p+|\xi_j-\eta_j|^p)\pi^\mathbf{n}_N(t,dX_nd\Xi_ndY_ndH_n)
\\
&\ge\frac1n\MKp(f(t)^{\otimes n},F_N^\mathbf{n}(t))^p\,.
\ea
\ee

Thus, inequality (\ref{IneqD}) implies that
\be\lb{IneqMKp}
\ba
\frac1n\MKp(f(t)^{\otimes n},F_N^\mathbf{n}(t))^p\le &D^{p}_N(0)e^{\L_{p}t}
\\
&+2^{2p}K_{p}\|\grad V\|_{L^\infty}^{p}\frac{[p/2]+1}{N^{\min(p/2,1)}}\frac{e^{\L_{p}t}-1}{\L_{p}}
\ea
\ee
for each $t\ge 0$, each $N\ge n\ge 1$ and each $\pi_N^{in}\in\Pi((f^{in})^{\otimes N},F_N^{in})$ satisfying
$$
T_\si\pi_N^{in}=\pi_N^{in}\quad\hbox{  for all }\si\in\fS_N\,.
$$

Finally, choose the initial coupling of the form
$$
\pi_N^{in}:=(f^{in})^{\otimes N}(dX_Nd\Xi_N)\de_{(X_N,\Xi_N)}(Y_N,H_N)\,,
$$
i.e.
$$
\pi_N:=\bD\#(f^{in})^{\otimes N}\,,\quad\hbox{ where }\bD:\,(X_N,\Xi_N)\mapsto(X_N,\Xi_N,X_N,\Xi_N)\,.
$$
Then
\be\lb{DNp0}
D^p_N(0)=\frac1N\sum_{j=1}^N\int(|x_k-y_k|^p+|\xi_k-\eta_k|^p)\pi_N^{in}(dX_Nd\Xi_NdY_NdH_N)=0\,,
\ee
since $\pi_N^{in}$ is supported in the diagonal
$$
\{(X_N,\Xi_N,Y_N,H_N)\hbox{ s.t. }X_N=Y_N\hbox{ et }\Xi_N=H_N\}\,.
$$
Inserting this last piece of information in (\ref{IneqMKp}) leads to the inequality in the statement of Theorem \ref{T-MFC}. 
\end{proof}

\begin{Rmk}\lb{R-dMKp}
In the argument above, we have not checked that $f(t)^{\otimes n}$ and $F_N^\mathbf{n}(t)$ belong to $\cP_p((\bR^d)^n)$ for all $1\le n\le N$ and all $t\ge 0$. This verification is needed on principle, because the Monge-Kantorovich
distance of exponent $p$ is defined on the set of probability measures with finite moments of order $p$. For each $p\ge 2$, set
$$
M_p(t):=\iint(|x|^p+|\xi|^p)f(t,dxd\xi)\,,\qquad t\ge 0\,,
$$
where $f$ is the solution of (\ref{Vlas}). An easy argument based on Young's inequality and the mean value inequality for $\grad V$ shows that
$$
M_p(t)\le M_p(0)e^{(p-1)(1+2\Lip(\grad V))t}\,,\qquad t\ge 0\,.
$$
In particular $f(t)\in\cP_p(\bR^d\times\bR^d)$ for all $t\ge 0$ provided that $f^{in}\in\cP_p(\bR^d\times\bR^d)$. Therefore $f(t)^{\otimes n}\in\cP_p((\bR^d\times\bR^d)^n)$ for each $n\ge 1$, and the inequality in Theorem \ref{T-MFC}
implies that $F(t)_N^\mathbf{n}\in\cP_p((\bR^d\times\bR^d))$ for all $t\ge 0$ and each $n=1,\ldots,N$, since $\MKp(f(t)^{\otimes n},F(t)_N^\mathbf{n})<\infty$.
\end{Rmk}

%%%%%%%%%%%%%%%%%%%%%%%%%%%%%%%%%%%%%%%%%%%%%%%%%%%%%%%%%%%%%%%%%%%%%%%%%%%%%%%%%%%%%%%%%%%%%%%%%%%%%%%%%%%%%%%%%%%%%%%%%

\section{Proof of Theorem \ref{T-MKeps}: Properties of $MK^\eps_2$}\lb{S-MKeps}

%%%%%%%%%%%%%%%%%%%%%%%%%%%%%%%%%%%%%%%%%%%%%%%%%%%%%%%%%%%%%%%%%%%%%%%%%%%%%%%%%%%%%%%%%%%%%%%%%%%%%%%%%%%%%%%%%%%%%%%%%

\subsection{The general lower bound (\ref{LBMKeps})}

%%%%%%%%%%%%%%%%%%%%%%%%%%%%%%%%%%%%%%%%%%%%%%%%%%%%%%%%%%%%%%%%%%%%%%%%%%%%%%%%%%%%%%%%%%%%%%%%%%%%%%%%%%%%%%%%%%%%%%%%%

The lower bound (\ref{LBMKeps}) can be viewed as a variant of the uncertainty principle. More precisely
$$
Q^*Q+P^*P=(Q+iP)^*(Q+iP)+i(P^*Q-Q^*P)\ge i(P^*Q-Q^*P)\,.
$$
On the other hand,
$$
(P^*Q-Q^*P)=-i\eps\left(\Div_{x_1}(x_1)+\Div_{x_2}(x_2)\right)=-2id\eps I_{\fH_2}\,,
$$
so that
$$
Q^*Q+P^*P\ge 2d\eps I_{\fH_2}\,.
$$
Therefore, for each $R\in\cD(\fH_2)$, one has
$$
\ba
\Tr_{\fH_2}((Q^*Q+P^*P)R)&=\Tr_{\fH_2}(R^{1/2}(Q^*Q+P^*P)R^{1/2})
\\
&\ge 2d\eps\Tr_{\fH_2}(R)=2d\eps\,.
\ea
$$

\subsection{Upper bound for T\"oplitz operators}

%%%%%%%%%%%%%%%%%%%%%%%%%%%%%%%%%%%%%%%%%%%%%%%%%%%%%%%%%%%%%%%%%%%%%%%%%%%%%%%%%%%%%%%%%%%%%%%%%%%%%%%%%%%%%%%%%%%%%%%%%

The goal of this section is to prove statement (1) in Theorem \ref{T-MKeps}.

In the course of the proof, we shall need the following intermediate result. Everywhere in this section, the identification $\bR^{2d}\ni(q,p)\mapsto q+ip\in\bC^d$ is implicitly assumed. 

\begin{Lem}[Construction of quantum couplings]\lb{L-QCoupl}
Let $\eps>0$ and let $\rho^\eps_1$ and $\rho^\eps_2$ be T\"oplitz operators at scale $\eps$ on $L^2(\bR^d)$ with symbols $(2\pi\eps)^d\mu_1$ and $(2\pi\eps)^d\mu_2$, where $\mu_1,\mu_2\in\cP(\bR^{2d})$. For each coupling 
$\mu\in\Pi(\mu_1,\mu_2)$, the T\"oplitz operator $\Op^T_\eps((2\pi\eps)^{2d}\mu)$ belongs to $\cQ(\rho^\eps_1,\rho^\eps_2)$.
\end{Lem}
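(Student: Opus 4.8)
The plan is to verify directly that $R:=\Op^T_\eps((2\pi\eps)^{2d}\mu)$ satisfies the two marginal conditions in Definition \ref{D-DefQ}, namely $\Tr_{\fH_2}((A\otimes I_\fH)R)=\Tr_\fH(A\rho^\eps_1)$ and $\Tr_{\fH_2}((I_\fH\otimes A)R)=\Tr_\fH(A\rho^\eps_2)$ for every $A\in\cL(\fH)$. The key observation is that T\"oplitz quantization is linear in the symbol and intertwines partial traces with marginalization of the symbol: if one integrates out one of the two phase-space variables $(q_2,p_2)$ from $(2\pi\eps)^{2d}\mu(dq_1dp_1dq_2dp_2)$, one recovers $(2\pi\eps)^d\mu_1(dq_1dp_1)$ because $\mu\in\Pi(\mu_1,\mu_2)$, and the T\"oplitz operator with that symbol is by definition $\rho^\eps_1$. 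So the entire proof reduces to the identity
$$
\Tr_{\fH_2}((A\otimes I_\fH)\Op^T_\eps((2\pi\eps)^{2d}\mu))=\Tr_\fH(A\,\Op^T_\eps((2\pi\eps)^d\mu_1))
$$
and its mirror image with $I_\fH\otimes A$ and $\mu_2$.

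To establish this, I would fall back on the coherent-state definition of T\"oplitz operators recalled in Appendix \ref{A-Toeplitz}: $\Op^T_\eps(m)=\int m(z)\,|\phi_z\ra\la\phi_z|\,\tfrac{dz}{(2\pi\eps)^n}$, where $|\phi_z\ra$ is the normalized coherent state at the phase-space point $z$, and crucially the coherent states on $\fH_2=\fH\otimes\fH$ factor as $|\phi_{(z_1,z_2)}\ra=|\phi_{z_1}\ra\otimes|\phi_{z_2}\ra$ for $z_1,z_2\in\bR^{2d}$. Then
$$
(A\otimes I_\fH)\Op^T_\eps((2\pi\eps)^{2d}\mu)=\iint \mu(dz_1dz_2)\,(A|\phi_{z_1}\ra\la\phi_{z_1}|)\otimes(|\phi_{z_2}\ra\la\phi_{z_2}|),
$$
and taking $\Tr_{\fH_2}$ factors as a product of two traces; the second factor is $\Tr_\fH(|\phi_{z_2}\ra\la\phi_{z_2}|)=\|\phi_{z_2}\|^2=1$, so one is left with $\int\left(\int\mu(dz_1dz_2)\right)\Tr_\fH(A|\phi_{z_1}\ra\la\phi_{z_1}|)$, i.e. $\int\mu_1(dz_1)\Tr_\fH(A|\phi_{z_1}\ra\la\phi_{z_1}|)=\Tr_\fH(A\rho^\eps_1)$ after recognizing $\int\mu_1(dz_1)|\phi_{z_1}\ra\la\phi_{z_1}|=(2\pi\eps)^{-d}\Op^T_\eps((2\pi\eps)^d\mu_1)\cdot(2\pi\eps)^d=\rho^\eps_1$. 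The same computation with the roles of the two factors swapped gives the second marginal. One should also note $R=R^*\ge0$ and $\Tr_{\fH_2}R=1$: positivity and self-adjointness are immediate since $R$ is an integral of the positive operators $|\phi_z\ra\la\phi_z|$ against the positive measure $\mu$, and $\Tr_{\fH_2}R=\int\mu(dz_1dz_2)=1$ since $\mu$ is a probability measure. Hence $R\in\cD(\fH_2)$ and $R\in\cQ(\rho^\eps_1,\rho^\eps_2)$.

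The only genuinely delicate points are bookkeeping ones: keeping the normalization factors $(2\pi\eps)^d$ versus $(2\pi\eps)^{2d}$ straight (the symbol of the coupling carries the ``double'' power because it lives on $\bR^{2d}\times\bR^{2d}$), and justifying the interchange of the operator-valued integral with the partial trace and with $A\otimes I_\fH$ — this is legitimate because $\mu$ is a finite measure and $\||\phi_z\ra\la\phi_z|\|_{\mathrm{tr}}=1$ uniformly in $z$, so the integral converges in trace norm and all the manipulations are continuous for that norm. I do not anticipate a real obstacle here; the lemma is essentially a direct consequence of the tensorization property of coherent states together with the defining marginal property of $\mu\in\Pi(\mu_1,\mu_2)$, and it is exactly the quantum counterpart of the trivial classical fact that a coupling's marginals are the prescribed measures.
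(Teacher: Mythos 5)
Your proof is correct and follows essentially the same route as the paper's: you use the tensorization of coherent states on $\fH_2 = \fH\otimes\fH$, take the partial trace under the integral, use $\la z_2,\eps|z_2,\eps\ra = 1$ to reduce to the marginal $\mu_1$, and recognize the result as $\rho^\eps_1$. The additional remark on trace-norm convergence of the coherent-state integral, justifying the interchange of integral and partial trace, is a sound touch the paper leaves implicit.
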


\begin{proof}
Set $\rho^\eps_j=\Op^T_\eps((2\pi\eps)^d\mu_j)$ for $j=1,2$. We recall that $\fH=L^2(\bR^d)$. 

Since $\mu$ is a probability measure on $\bC^{2d}$, the operator $\Op^T_\eps((2\pi\eps)^{2d}\mu)$ belongs to $\cD(\fH\otimes\fH)$ by (\ref{SAToep}). Next, one has
$$
\Op_\eps^T((2\pi\eps)^{2d}\mu)=\int_{\bC^{2d}}|(z_1,z_2),\eps\ra\la(z_1,z_2),\eps|\mu(dz_1dz_2)\,,
$$
where the wave function $|(z_1,z_2),\eps\ra$ is defined in formula (\ref{CohSt}) in Appendix \ref{A-Toeplitz}. In this section, we use the bra-ket notation, also recalled in Appendix \ref{A-Toeplitz}. Observe that
$$
|(z_1,z_2),\eps\ra\la(z_1,z_2),\eps|=|z_1,\eps\ra\la z_1,\eps|\otimes|z_2,\eps\ra\la z_2,\eps|\,,
$$
so that
$$
\ba
\Tr_{\fH\otimes\fH}((A\otimes I_\fH)|(z_1,z_2),\eps\ra\la(z_1,z_2),\eps|)&
\\
=\Tr_\fH(A|z_1,\eps\ra\la z_1,\eps|)\Tr_\fH(|z_2,\eps\ra\la z_2,\eps|)&
\\
=\Tr_\fH(A|z_1,\eps\ra\la z_1,\eps|)\la z_2,\eps|z_2,\eps\ra&
\\
=\Tr_\fH(A|z_1,\eps\ra\la z_1,\eps|)&\,,
\ea
$$
for each $A\in\cL(\fH)$. Therefore, 
$$
\ba
\Tr_{\fH\otimes\fH}((A\otimes I_{\fH})\Op_\eps^T((2\pi\eps)^{2d}\mu))&
\\
=\int_{\bC^{2d}}\Tr_{\fH\otimes\fH}((A\otimes I_{\fH})|(z_1,z_2),\eps\ra\la(z_1,z_2),\eps|)\mu(dz_1dz_2)&
\\
=\int_{\bC^{2d}}\Tr_{\fH}(A|z_1,\eps\ra\la z_1,\eps|)\mu(dz_1dz_2)&
\\
=\int_{\bC^{d}}\Tr_{\fH}(A|z_1,\eps\ra\la z_1,\eps|)\mu_1(dz_1)&
\\
=\Tr_{\fH}(A\Op_\eps^T((2\pi\eps)^{d}\mu_1))&\,.
\ea
$$
Likewise, for each $A\in\cL(\fH)$,
$$
\Tr_{\fH\otimes\fH}((I_\fH\otimes A)\Op^T_\eps((2\pi\eps)^{2d}\mu))=\Tr_{\fH}(A\Op^T_\eps((2\pi\eps)^d\mu_2))=\Tr_{\fH}(A\rho^\eps_2)\,.
$$
The conclusion immediately follows.
\end{proof}

\begin{proof}[Proof of Theorem \ref{T-MKeps} (1)]
For each $\mu\in\Pi(\mu_1,\mu_2)$, by Lemma \ref{L-QCoupl} and the definition of $MK^\eps_2$ , one has
$$
MK^\eps_2(\rho_1,\rho_2)^2\le\Tr_{\fH\otimes\fH}((Q^*Q+P^*P)\Op^T_\eps((2\pi\eps)^{2d}\mu))\,,
$$
where we recall that
\be\lb{DefQP}
\ba
Q\psi(x_1,x_2)=(x_1-x_2)\psi(x_1,x_2)\,,
\\
P\psi(x_1,x_2)=-i\eps(\grad_{x_1}-\grad_{x_2})\psi(x_1,x_2)\,,
\ea
\ee
so that
$$
Q^*Q\psi(x_1,x_2)=|x_1-x_2|^2\psi(x_1,x_2)\,,
$$
and
$$
P^*P\psi(x_1,x_2)=-\eps^2(\Div_{x_1}-\Div_{x_2})(\grad_{x_1}-\grad_{x_2})\psi(x_1,x_2)\,.
$$
In other words
$$
Q^*Q+P^*P=\Op^T_\eps(|q_1-q_2|^2+|p_1-p_2|^2)-2d\eps I_{\fH_2}
$$
--- see formula (\ref{ToepQuad}) in Appendix \ref{A-Toeplitz}, with $f(y_1,y_2)=|y_1-y_2|^2$, and $\Dlt f=4d$. By formula (\ref{MK2}) in Appendix \ref{A-Toeplitz}, one has
\be\lb{DToepl=}
\ba
\Tr_{\fH\otimes\fH}((Q^*Q+P^*P)\Op^T_\eps((2\pi\eps)^{2d}\mu))&
\\
=\int_{(\bR^d)^4}(|x_1-x_2|^2+|\xi_1-\xi_2|^2)\mu(dx_1d\xi_1dx_2d\xi_2)+2d\eps&\,.
\ea
\ee

Therefore, for each $\mu\in\Pi(\mu_1,\mu_2)$, one has
$$
MK^\eps_2(\rho_1,\rho_2)^2\le\int_{(\bR^d)^4}(|x_1-x_2|^2+|\xi_1-\xi_2|^2)\mu(dx_1d\xi_1dx_2d\xi_2)+2d\eps\,.
$$
Observing that the left hand side of this inequality is independent of $\mu\in\Pi(\mu_1,\mu_2)$, we conclude that
$$
\ba
MK^\eps_2(\rho_1,\rho_2)^2&\le\inf_{\pi\in\Pi(\mu_1,\mu_2)}\int_{(\bR^d)^4}(|x_1-x_2|^2+|\xi_1-\xi_2|^2)\mu(dx_1d\xi_1dx_2d\xi_2)+2d\eps
\\
&=\MKd(\mu_1,\mu_2)^2+2d\eps
\ea
$$
which is the sought inequality.
\end{proof}

\subsection{Asymptotic lower bound for $MK^\eps_2$}

%%%%%%%%%%%%%%%%%%%%%%%%%%%%%%%%%%%%%%%%%%%%%%%%%%%%%%%%%%%%%%%%%%%%%%%%%%%%%%%%%%%%%%%%%%%%%%%%%%%%%%%%%%%%%%%%%%%%%%%%%

The core of the argument leading to the lower bound in Theorem \ref{T-MKeps} (2) combines Kantorovich duality with the convergence of Husimi functions to Wigner measures.

\begin{proof}[Proof of Theorem \ref{T-MKeps} (2)]
Let $a,b\in C_b(\bR^d)$ satisfy
\be\lb{Ineqab}
a(x_1,\xi_1)+b(x_2,\xi_2)\le|x_1-x_2|^2+|\xi_1-\xi_2|^2\quad\hbox{ for all }x_1,x_2,\xi_1,\xi_2\in\bR^d\,.
\ee
Hence\footnote{Denoting $a\otimes 1$, $1\otimes b$ and $c$ the functions $(x_1,\xi_1,x_2,\xi_2)\mapsto a(x_1,\xi_1)$, $(x_1,\xi_1,x_2,\xi_2)\mapsto b(x_2,\xi_2)$ and $(x_1,\xi_1,x_2,\xi_2)\mapsto|x_1-x_2|^2+|\xi_1-\xi_2|^2$ respectively.}
$$
\ba
(\Op^T_\eps(a)\otimes I_\fH+I_\fH\otimes\Op^T_\eps(b))&=\Op^T_\eps(a\otimes 1+1\otimes b)
\\
&\le\Op^T_\eps(c)=Q^*Q+P^*P+2d\eps I_{\fH_2}\,.
\ea
$$
Thus, for each $R^\eps\in\cQ(\rho^\eps_1,\rho^\eps_2)$, one has
$$
\ba
\Tr_{\fH\otimes\fH}((Q^*Q+P^*P)R^\eps)&
\\
\ge\Tr_{\fH\otimes\fH}((\Op^T_\eps(a)\otimes I_\fH+I_\fH\otimes\Op^T_\eps(b))R^\eps)-2d\eps&
\\
=\Tr_{\fH}(\Op^T_\eps(a)\rho^\eps_1)+\Tr_{\fH}(\Op^T_\eps(b)\rho^\eps_2)-2d\eps&\,.
\ea
$$
Taking the inf of the left hand side as $R^\eps$ runs through $\cQ(\rho^\eps_1,\rho^\eps_2)$, one arrives at the inequality
\be\lb{1LowBdMKeps}
MK^\eps_2(\rho^\eps_1,\rho^\eps_2)^2\ge\Tr_{\fH}(\Op^T_\eps(a)\rho^\eps_1)+\Tr_{\fH}(\Op^T_\eps(b)\rho^\eps_2)-2d\eps\,.
\ee
Next the traces on the right hand side are expressed in terms of the Husimi functions of $\rho^\eps_1$ and $\rho^\eps_2$ by formula (\ref{TrToep}):
$$
\ba
\Tr_{\fH}(\Op^T_\eps(a)\rho^\eps_1)=\iint a(x_1,\xi_1)\tilde W_\eps[\rho^\eps_1](x_1,\xi_1)dx_1d\xi_1\,,
\\
\Tr_{\fH}(\Op^T_\eps(b)\rho^\eps_2)=\iint b(x_2,\xi_2)\tilde W_\eps[\rho^\eps_2](x_2,\xi_2)dx_2d\xi_2\,.
\ea
$$
Hence
$$
\ba
MK^\eps_2(\rho^\eps_1,\rho^\eps_2)^2&\ge\iint a(x_1,\xi_1)\tilde W_\eps[\rho^\eps_1](x_1,\xi_1)dx_1d\xi_1
\\
&+\iint b(x_2,\xi_2)\tilde W_\eps[\rho^\eps_2](x_2,\xi_2)dx_2d\xi_2-2d\eps\,.
\ea
$$
Taking the sup of both sides of this equality over all $a,b\in C_b(\bR^d\times\bR^d)$ satisfying (\ref{Ineqab}) shows that
$$
\ba
MK^\eps_2(\rho^\eps_1,\rho^\eps_2)^2&
\\
\ge\sup_{a\otimes 1+1\otimes b\le c\atop a,b\in C_b(\bR^d\times\bR^d)}\left(\iint a\tilde W_\eps[\rho^\eps_1]dx_1d\xi_1+\iint b\tilde W_\eps[\rho^\eps_2]dx_2d\xi_2\right)-2d\eps&
\\
=\MKd(\tilde W_\eps[\rho^\eps_1],\tilde W_\eps[\rho^\eps_2])^2-2d\eps&\,,
\ea
$$
where the last equality follows from Kantorovich duality (Theorem 1 in chapter 1 of \cite{VillaniTOT}). This gives the first inequality in Theorem \ref{T-MKeps} (2).

Since the sequence of Wigner transforms of the density matrices $\rho^\eps_j$ satisfies 
$$
W_\eps[\rho^\eps_j]\to w_j\quad\hbox{ in }\cS'(\bR^d\times\bR^d)\hbox{  as }\eps\to 0
$$ 
for $j=1,2$, one has
$$
\tilde W_\eps[\rho^\eps_j]\to w_j\hbox{ weakly in the sense of measures on }\bR^d\times\bR^d
$$
for $j=1,2$, by Theorem III.1 (1) in \cite{PLLTP}. By the first inequality in Theorem \ref{T-MKeps} (2) already established above and Remark 6.12 in \cite{VillaniTOT2},
$$
\varliminf_{\eps\to 0}MK^\eps_2(\rho^\eps_1,\rho^\eps_2)^2\ge\varliminf_{\eps\to 0}\MKd(W_\eps[\rho^\eps_1],W_\eps[\rho^\eps_2])^2\ge\MKd(w_1,w_2)^2\,.
$$
This concludes the proof of Theorem \ref{T-MKeps}.
\end{proof}

%%%%%%%%%%%%%%%%%%%%%%%%%%%%%%%%%%%%%%%%%%%%%%%%%%%%%%%%%%%%%%%%%%%%%%%%%%%%%%%%%%%%%%%%%%%%%%%%%%%%%%%%%%%%%%%%%%%%%%%%%

\section{Proof of Theorem \ref{T-UQMF}}\label{S-UQMF}

%%%%%%%%%%%%%%%%%%%%%%%%%%%%%%%%%%%%%%%%%%%%%%%%%%%%%%%%%%%%%%%%%%%%%%%%%%%%%%%%%%%%%%%%%%%%%%%%%%%%%%%%%%%%%%%%%%%%%%%%%

The quantum $N$-body Hamiltonian is
$$
\cH_{\eps,N}:=\sum_{j=1}^N-\tfrac12\eps^2\Dlt_j+\frac1{2N}\sum_{1\le j,k\le N}V_{jk}\,,
$$
an unbounded self-adjoint operator on $\fH_N:=\fH^{\otimes N}=L^2((\bR^d)^N)$. We denote by $\Dlt_j$ the Laplacian acting on the variable $x_j$, and by $V_{jk}$ the multiplication by $V(x_j-x_k)$. The $N$-body von Neumann equation 
for the density matrix is
\be\lb{LiouvilleQ}
i\eps\d_t\rho_{\eps,N}:=[\cH_{\eps,N},\rho_{\eps,N}]\,,\qquad\rho_{\eps,N}\rstr_{t=0}=\rho_{\eps,N}^{in}\,,
\ee
with initial data $\rho_{\eps,N}^{in}\in\cD(\fH_N)$.

On the other hand, for each $\rho\in\cD(\fH)$, we consider the mean field Hamiltonian
$$
\bH^\rho_\eps:=-\tfrac12\eps^2\Dlt+V_\rho\,,
$$
where $V_\rho$ designates the operator defined on $\fH$ as the multiplication by the function
$$
x\mapsto\int V(x-x')\rho(x',x')dx'=:V_\rho(x)\,.
$$
The Hartree equation for the density matrix $\rho_\eps$ is
\be\lb{HartreeRho}
i\eps\d_t\rho_\eps=[\bH^{\rho_\eps}_\eps,\rho_\eps]\,,\qquad\rho_\eps\rstr_{t=0}=\rho_\eps^{in}\,.
\ee
By a straightforward computation, the solution $\rho_\eps$ of the Hartree equation (\ref{HartreeRho}) satisfies
\be\lb{HartreeN}
i\eps\d_t\rho_\eps^{\otimes N}=[\bH_{\eps,N}^{\rho_\eps},\rho_\eps^{\otimes N}]\,,\qquad\rho_\eps^{\otimes N}\rstr_{t=0}=(\rho^{in})^{\otimes N}\,,
\ee
where
$$
\bH_{\eps,N}^\rho:=\bH_\eps^\rho\otimes I_{\fH_{N-1}}+I\otimes\bH_\eps^\rho\otimes I_{\fH_{N-2}}+\ldots+I_{\fH_{N-1}}\otimes\bH_\eps^\rho\,.
$$

\subsection{The quantum dynamics of couplings}

%%%%%%%%%%%%%%%%%%%%%%%%%%%%%%%%%%%%%%%%%%%%%%%%%%%%%%%%%%%%%%%%%%%%%%%%%%%%%%%%%%%%%%%%%%%%%%%%%%%%%%%%%%%%%%%%%%%%%%%%%

Let $R^{in}_{\eps,N}\in\cQ((\rho^{in}_\eps)^{\otimes N},\rho_{\eps,N}^{in})$, and define $t\mapsto R_{\eps,N}(t)\in\cD(\fH_{2N})$ to be the solution of the Cauchy problem
\be\lb{QCPCoupl}
i\eps\d_tR_{\eps,N}=[\bH_{\eps,N}^{\rho_\eps}\otimes I_{\fH_N}+ I_{\fH_N}\otimes\cH_{\eps,N},R_{\eps,N}]\,,\qquad R_{\eps,N}\rstr_{t=0}=R^{in}_{\eps,N}\,.
\ee
For each $\si\in\fS_N$, we denote by $\cT_\si$ the unitary operator on $\fH_{2N}$ defined by
$$
\cT_\si\Psi(x_1,\ldots,x_N,y_1,\ldots,y_N)=\Psi(x_{\si^{-1}(1)},\ldots,x_{\si^{-1}(N)},y_{\si^{-1}(1)},\ldots,y_{\si^{-1}(N)})
$$
for each $\Psi\in\fH_{2N}$. We shall henceforth assume that
\be\lb{SymReNin}
\cT_\si R^{in}_{\eps,N}\cT^*_\si=R^{in}_{\eps,N}\,,\quad\hbox{ for each }\si\in\fS_N\,.
\ee

\begin{Lem}\lb{L-RN}
For each $t\ge 0$, one has $R_{\eps,N}(t)\in\cQ((\rho_\eps(t))^{\otimes N},\rho_{\eps,N}(t))$. Moreover
\be\lb{SymReNt}
\cT_\si R_{\eps,N}(t)\cT^*_\si=R_{\eps,N}(t)\qquad\hbox{ for each }\si\in\fS_N\hbox{ and }t\ge 0\,.
\ee
\end{Lem}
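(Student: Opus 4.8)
The plan is to mirror the classical argument in Lemma~\ref{L-DynCoupl}. The statement has two parts: first that $R_{\eps,N}(t)$ is a coupling of $(\rho_\eps(t))^{\otimes N}$ and $\rho_{\eps,N}(t)$, and second that it is invariant under the permutation action $\cT_\si$.

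\textbf{Step 1: the marginal computation.} For each $A\in\cL(\fH_N)$, consider $t\mapsto\Tr_{\fH_{2N}}((A\otimes I_{\fH_N})R_{\eps,N}(t))$. Differentiating in $t$ using \eqref{QCPCoupl} and the cyclicity of the trace gives
$$
i\eps\frac{d}{dt}\Tr_{\fH_{2N}}((A\otimes I_{\fH_N})R_{\eps,N})=\Tr_{\fH_{2N}}([A\otimes I_{\fH_N},\bH_{\eps,N}^{\rho_\eps}\otimes I_{\fH_N}+I_{\fH_N}\otimes\cH_{\eps,N}]R_{\eps,N})\,.
$$
Since $A\otimes I_{\fH_N}$ commutes with $I_{\fH_N}\otimes\cH_{\eps,N}$, and $[A\otimes I_{\fH_N},\bH_{\eps,N}^{\rho_\eps}\otimes I_{\fH_N}]=[A,\bH_{\eps,N}^{\rho_\eps}]\otimes I_{\fH_N}$, the right-hand side equals $\Tr_{\fH_N}([A,\bH_{\eps,N}^{\rho_\eps}]\rho^{\mathbf 1\text{-block}})$ where I denote by $S_{\eps,N}(t)$ the partial trace of $R_{\eps,N}(t)$ over the last $N$ variables, i.e. $\Tr_{\fH_N}(AS_{\eps,N}(t))=\Tr_{\fH_{2N}}((A\otimes I_{\fH_N})R_{\eps,N}(t))$. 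Thus $S_{\eps,N}$ solves $i\eps\d_t S_{\eps,N}=[\bH_{\eps,N}^{\rho_\eps},S_{\eps,N}]$ with $S_{\eps,N}(0)=(\rho_\eps^{in})^{\otimes N}$ by \eqref{SymReNin}... more precisely by the coupling property of $R_{\eps,N}^{in}$. But $\rho_\eps^{\otimes N}$ solves exactly this equation by \eqref{HartreeN}, so uniqueness for the linear von Neumann equation with the (time-dependent, bounded-perturbation) generator $\bH_{\eps,N}^{\rho_\eps(t)}$ forces $S_{\eps,N}(t)=(\rho_\eps(t))^{\otimes N}$. The same computation with $(I_{\fH_N}\otimes A)$ in place of $(A\otimes I_{\fH_N})$ shows the partial trace over the first $N$ variables solves $i\eps\d_t(\cdot)=[\cH_{\eps,N},\cdot]$ with initial data $\rho_{\eps,N}^{in}$, hence equals $\rho_{\eps,N}(t)$ by uniqueness for \eqref{LiouvilleQ}. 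This gives $R_{\eps,N}(t)\in\cQ((\rho_\eps(t))^{\otimes N},\rho_{\eps,N}(t))$.

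\textbf{Step 2: permutation invariance.} Observe that $\cT_\si$ commutes with $\cH_{\eps,N}$ (already noted in the bosonic discussion) and also with $\bH_{\eps,N}^{\rho_\eps}$, since $\bH_{\eps,N}^{\rho_\eps}$ is a symmetric sum of one-body operators; hence $\cT_\si$ commutes with the full generator $\bH_{\eps,N}^{\rho_\eps}\otimes I_{\fH_N}+I_{\fH_N}\otimes\cH_{\eps,N}$. Therefore $t\mapsto\cT_\si R_{\eps,N}(t)\cT_\si^*$ solves the same Cauchy problem \eqref{QCPCoupl}, with initial data $\cT_\si R_{\eps,N}^{in}\cT_\si^*=R_{\eps,N}^{in}$ by \eqref{SymReNin}; uniqueness then yields \eqref{SymReNt}.

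\textbf{Expected main obstacle.} The only delicate point is the appeal to uniqueness: one must justify that the Cauchy problems \eqref{QCPCoupl}, \eqref{LiouvilleQ}, \eqref{HartreeRho} (and the partial-trace equations) are well-posed and have unique solutions in $\cD$. Since $V\in C^{1,1}$ is bounded, $V_{\rho_\eps(t)}$ is a bounded time-dependent potential, and $\bH_{\eps,N}^{\rho_\eps(t)}$ is a bounded perturbation of a fixed self-adjoint Laplacian; the evolution is generated by a strongly continuous family of unitaries and uniqueness is standard, but the argument should be cited or stated cleanly (e.g. via the Dyson series / Duhamel formula). A secondary technical point is the legitimacy of differentiating the traces and interchanging $\frac{d}{dt}$ with $\Tr$; this is routine given the unitary propagators but deserves a one-line justification.
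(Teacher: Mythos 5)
Your proposal is correct and follows essentially the same line as the paper's own proof: differentiate the two partial traces, identify the two marginals as solutions of the $N$-body Hartree and von Neumann equations respectively, invoke uniqueness, and for permutation invariance observe that $\cT_\si$ commutes with the full generator so that $\cT_\si R_{\eps,N}(t)\cT^*_\si$ and $R_{\eps,N}(t)$ solve the same Cauchy problem with the same initial data. The one small point worth tidying is the mid-sentence correction in Step~1 --- the identity $S_{\eps,N}(0)=(\rho_\eps^{in})^{\otimes N}$ is indeed a consequence of $R_{\eps,N}^{in}\in\cQ((\rho_\eps^{in})^{\otimes N},\rho_{\eps,N}^{in})$, not of \eqref{SymReNin}; and the paper, when differentiating, restricts to $A\in\cL(\fH_N)$ with $[\sum_j\Dlt_{x_j},A]\in\cL(\fH_N)$ to keep the commutator with the (unbounded) generator well-defined, which you might want to state explicitly alongside your remark on interchanging $d/dt$ and $\Tr$.
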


\begin{proof}
By definition
$$
R_{\eps,N}(t)=U_{\eps,N}(t/\eps)R^{in}_{\eps,N}U_{\eps,N}(t/\eps)^*\,,
$$
where 
$$
\frac{d}{dt}U_{\eps,N}(t)=-i(\bH_{\eps,N}^{\rho_\eps}\otimes I_{\fH_N}+ I_{\fH_N}\otimes\cH_{\eps,N})U_{\eps,N}(t)\,,\quad U_{\eps,N}(0)=I_{\fH_{2N}}\,.
$$
Since $\bH_{\eps,N}^{\rho_\eps}\otimes I_{\fH_N}+ I_{\fH_N}\otimes\cH_{\eps,N}$ is self-adjoint, $U_{\eps,N}(t)$ is unitary for each $t\ge 0$. Therefore
$$
R_{\eps,N}(t)=R_{\eps,N}(t)^*\ge 0\quad\hbox{ and }\Tr_{\fH_{2N}}(R_{\eps,N}(t))=\Tr_{\fH_{2N}}(R^{in}_{\eps,N})=1
$$
for each $t\ge 0$.

The marginals of the density $R_{\eps,N}$ in the product $\fH_{2N}=\fH_N\otimes\fH_N$ are defined by analogy with (\ref{Margi12})-(\ref{DefMargi12}) in Definition \ref{D-DefQ}: 
\be\lb{QMargi12}
\left\{
\ba
R_{\eps,N,1}\in\cD(\fH_N)\quad\hbox{ and }\Tr_{\fH_N}(AR_{\eps,N,1})=\Tr_{\fH_{2N}}((A\otimes I_{\fH_N})R_{\eps,N})\,,
\\
R_{\eps,N,2}\in\cD(\fH_N)\quad\hbox{ and }\Tr_{\fH_N}(AR_{\eps,N,2})=\Tr_{\fH_{2N}}((I_{\fH_N}\otimes A)R_{\eps,N})\,,
\ea
\right.
\ee
for each $A\in\cL(\fH_N)$.

Let $A\in\cL(\fH_N)$ be such that 
$$
\left[\sum_{j=1}^N\Dlt_{x_j},A\right]\in\cL(\fH_N)\,;
$$
then
$$
\ba
i\eps\d_t\Tr_{\fH_N}(AR_{\eps,N,1})=i\eps\d_t\Tr_{\fH_{2N}}((A\otimes I_{\fH_N})R_{\eps,N})&
\\
=-\Tr_{\fH_{2N}}([\bH_{\eps,N}^{\rho_\eps}\otimes I_{\fH_N}+ I_{\fH_N}\otimes\cH_{\eps,N},(A\otimes I_{\fH_N})]R_{\eps,N})&
\\
=-\Tr_{\fH_{2N}}(([\bH_{\eps,N}^{\rho_\eps},A]\otimes I_{\fH_N})R_{\eps,N})&
\\
=-\Tr_{\fH_N}([\bH_{\eps,N}^{\rho_\eps},A]R_{\eps,N,1})&
\\
=\Tr_{\fH_N}(A[\bH_{\eps,N}^{\rho_\eps},R_{\eps,N,1}])&
\ea
$$
and 
$$
R_{\eps,N,1}\rstr_{t=0}=R_{\eps,N,1}^{in}=(\rho^{in})^{\otimes N}\,.
$$
By uniqueness of the solution of the Cauchy problem (\ref{HartreeN}), one concludes that
$$
R_{\eps,N,1}(t)=\rho_\eps(t)^{\otimes N}\,,\quad\hbox{ for each }t\ge 0\,.
$$
Similarly
$$
\ba
i\eps\d_t\Tr_{\fH_N}(AR_{\eps,N,2})=i\eps\d_t\Tr_{\fH_{2N}}((I_{\fH_N}\otimes A)R_{\eps,N})&
\\
=-\Tr_{\fH_{2N}}([\bH_{\eps,N}^{\rho_\eps}\otimes I_{\fH_N}+ I_{\fH_N}\otimes\cH_{\eps,N},(I_{\fH_N}\otimes A)]R_{\eps,N})&
\\
=-\Tr_{\fH_{2N}}((I_{\fH_N}\otimes[\cH_{\eps,N},A])R_{\eps,N})&
\\
=-\Tr_{\fH_N}([\cH_{\eps,N},A]R_{\eps,N,2})
\\
=\Tr_{\fH_N}(A[\cH_{\eps,N},R_{\eps,N,2}])&
\ea
$$
and 
$$
R_{\eps,N,2}\rstr_{t=0}=R_{\eps,N,2}^{in}=\rho_{\eps,N}^{in}\,.
$$
By uniqueness of the solution of the Cauchy problem for the von Neumann equation (\ref{LiouvilleQ}), one concludes that
$$
R_{\eps,N,2}(t)=\rho_{\eps,N}(t)\,,\quad\hbox{ for each }t\ge 0\,.
$$

Finally, let $\si\in\fS_N$;
$$
\ba
i\eps\d_t(\cT_\si R_{\eps,N}(t)\cT^*_\si)=\cT_\si[\bH_{\eps,N}^{\rho_\eps}\otimes I_{\fH_N}+ I_{\fH_N}\otimes\cH_{\eps,N},R_{\eps,N}]\cT^*_\si
\\
=
[\cT_\si(\bH_{\eps,N}^{\rho_\eps}\otimes I_{\fH_N}+ I_{\fH_N}\otimes\cH_{\eps,N})\cT^*_\si,\cT_\si R_{\eps,N}\cT^*_\si]
\\
=[\tau_\si\bH_{\eps,N}^{\rho_\eps}\tau^*_\si\otimes I_{\fH_N}+ I_{\fH_N}\otimes \tau_\si\cH_{\eps,N}\tau^*_\si,\cT_\si R_{\eps,N}\cT^*_\si]
\\
=[\bH_{\eps,N}^{\rho_\eps}\otimes I_{\fH_N}+ I_{\fH_N}\otimes\cH_{\eps,N},\cT_\si R_{\eps,N}\cT^*_\si]
\ea
$$
where $\tau_\si$ is the unitary operator on $\fH_N$ defined in (\ref{DefTauSi}), since one has obviously
$$
\tau_\si\bH_{\eps,N}^{\rho_\eps}\tau^*_\si=\bH_{\eps,N}^{\rho_\eps}\quad\hbox{ and }\quad\tau_\si\cH_{\eps,N}\tau^*_\si=\cH_{\eps,N}\,.
$$
On the other hand
$$
\cT_\si R_{\eps,N}(0)\cT^*_\si=\cT_\si R^{in}_{\eps,N}(0)\cT^*_\si=R^{in}_{\eps,N}=R_{\eps,N}(0)\,,
$$
so that, by uniqueness of the solution of the Cauchy problem (\ref{QCPCoupl}), 
$$
\cT_\si R_{\eps,N}(t)\cT^*_\si=R_{\eps,N}(t)\qquad\hbox{ for each }t\ge 0\hbox{   and each }\si\in\fS_N\,.
$$
\end{proof}

\subsection{The Dobrushin type estimate}

%%%%%%%%%%%%%%%%%%%%%%%%%%%%%%%%%%%%%%%%%%%%%%%%%%%%%%%%%%%%%%%%%%%%%%%%%%%%%%%%%%%%%%%%%%%%%%%%%%%%%%%%%%%%%%%%%%%%%%%%%

Set
$$
D_{\eps,N}(t):=\Tr\left(\frac1N\sum_{j=1}^N(Q^*_jQ_j+P^*_jP_j)R_{\eps,N}(t)\right)\,.
$$
We compute
$$
\ba
\frac{dD_{\eps,N}}{dt}&=-\frac1\eps\Tr\left(i\frac1N\sum_{j=1}^N(Q^*_jQ_j+P^*_jP_j)[\bH_{\eps,N}^{\rho_\eps}\otimes I_{\fH_N}+ I_{\fH_N}\otimes\cH_{\eps,N},R_{\eps,N}]\right)
\\
&=\frac1\eps\Tr\left(i\left[\bH_{\eps,N}^{\rho_\eps}\otimes I_{\fH_N}+ I_{\fH_N}\otimes\cH_{\eps,N},\frac1N\sum_{j=1}^N(Q^*_jQ_j+P^*_jP_j)\right]R_{\eps,N}\right)
\\
&=\frac1\eps\Tr\left(i\left[\bH_{\eps,N}^{\rho_\eps}\otimes I_{\fH_N}-\cH_{\eps,N}\otimes I_{\fH_N},\frac1N\sum_{j=1}^N(Q^*_jQ_j+P^*_jP_j)\right]R_{\eps,N}\right)
\\
&+\frac1\eps\Tr\left(i\left[\cH_{\eps,N}\otimes I_{\fH_N}+ I_{\fH_N}\otimes\cH_{\eps,N},\frac1N\sum_{j=1}^N(Q^*_jQ_j+P^*_jP_j)\right]R_{\eps,N}\right).
\ea
$$

Consider the second term on the right hand side of this equality. One has
$$
\ba
{}&\left[\cH_{\eps,N}\otimes I_{\fH_N}\!+\! I_{\fH_N}\otimes\cH_{\eps,N},(Q^*_jQ_j+P^*_jP_j)\right]
\\
&\qquad\qquad=
[-\tfrac12\eps^2(\Dlt_j\otimes I_{\fH}+I_{\fH}\otimes\Dlt_j),Q^*_jQ_j]_j
\\
&\qquad\qquad+
\frac1N\sum_{k=1}^N[V_{jk}\otimes I_{\fH}+I_{\fH}\otimes V_{jk},P_j^*P_j]_j
\ea
$$
where the index $j$ on the brackets in the right hand side indicate that the corresponding operators act on the variables $x_j$ et $y_j$. In other words
$$
A_j=I_\fH^{\otimes(j-1)}\otimes A\otimes I_\fH^{\otimes(N-j)}
$$
for each operator $A$ on $\fH$.

Obviously
$$
\ba
{}&\left[-\tfrac12\eps^2(\Dlt_j\otimes I_{\fH}+I_{\fH}\otimes\Dlt_j),Q^*_jQ_j\right]
\\
=&-\tfrac12\eps^2(\Div_{x_j}[\grad_{x_j},Q^*_jQ_j]\!+\![\grad_{x_j},Q^*_jQ_j]\cdot\grad_{x_j})
\\
&-\tfrac12\eps^2(\Div_{y_j}[\grad_{y_j},Q^*_jQ_j]\!+\![\grad_{y_j},Q^*_jQ_j]\cdot\grad_{y_j})
\\
=&-i\eps(P_j^*Q_j+Q_j^*P_j)\,.
\ea
$$
On the other hand
$$
\ba
\left[V_{jk}\otimes I_{\fH}+I_{\fH}\otimes V_{jk},P_j^*P_j\right]=&[V_{jk}\otimes I_{\fH}+I_{\fH}\otimes V_{jk},P_j^*]P_j
\\
&+P_j^*[V_{jk}\otimes I_{\fH}+I_{\fH}\otimes V_{jk},P_j]
\\
=&i\eps(\grad V(x_j-x_k)-\grad V(y_j-y_k))P_j
\\
&+i\eps P_j^*(\grad V(x_j-x_k)-\grad V(y_j-y_k)))\,.
\ea
$$

Therefore 
$$
\ba
\frac1\eps\Tr\left(i\left[\cH_{\eps,N}\otimes I_{\fH_N}+ I_{\fH_N}\otimes\cH_{\eps,N},\frac1N\sum_{j=1}^N(Q^*_jQ_j+P^*_jP_j)\right]R_{\eps,N}\right)&
\\
=\Tr\left(\frac1N\sum_{j=1}^N(P_j^*Q_j+Q_j^*P_j)R_{\eps,N}\right)&
\\
-\Tr\left(\frac1{N^2}\sum_{1\le j,k\le N}P_j^*(\grad V(x_j-x_k)-\grad V(y_j-y_k))R_{\eps,N}\right)&
\\
-\Tr\left(\frac1{N^2}\sum_{1\le j,k\le N}(\grad V(x_j-x_k)-\grad V(y_j-y_k))P_jR_{\eps,N}\right)&\,.
\ea
$$
By the Cauchy-Schwarz inequality
$$
|\Tr\left((P_j^*Q_j+Q_j^*P_j)R_{\eps,N}\right)|\le\Tr\left((P_j^*P_j+Q_j^*Q_j)R_{\eps,N}\right)\,,
$$
while
$$
\ba
\Tr((P_j^*(\grad V(x_j-x_k)\!-\!\grad V(y_j-y_k))\!+\!(\grad V(x_j-x_k)\!-\!\grad V(y_j-y_k))P_j)R_{\eps,N})&
\\
\le\Tr(P_j^*P_jR_{\eps,N})+\Tr(|\grad V(x_j-x_k)-\grad V(y_j-y_k)|^2R_{\eps,N})&\,.
\ea
$$

Eventually
$$
\ba
\frac1\eps\Tr\left(i\left[\cH_{\eps,N}\otimes I_{\fH_N}+ I_{\fH_N}\otimes\cH_{\eps,N},\frac1N\sum_{j=1}^N(Q^*_jQ_j+P^*_jP_j)\right]R_{\eps,N}\right)&
\\
\le
\Tr\left(\frac1N\sum_{j=1}^N(P_j^*P_j+Q_j^*Q_j)R_{\eps,N}\right)+\Tr\left(\frac1{N^2}\sum_{1\le j,k\le N}P_j^*P_jR_{\eps,N}\right)&
\\
+\Tr\left(\frac1{N^2}\sum_{1\le j,k\le N}|\grad V(x_j-x_k)-\grad V(y_j-y_k)|^2R_{\eps,N}\right)&
\\
\le2D_{\eps,N}+\Lip(\grad V)^2\Tr\left(\frac1{N^2}\sum_{1\le j,k\le N}|(x_j-x_k)-(y_j-y_k)|^2R_{\eps,N}\right)&
\\
\le2D_{\eps,N}+\Lip(\grad V)^2\Tr\left(\frac2{N^2}\sum_{1\le j,k\le N}(|x_j-y_j|^2+|x_k-y_k|^2)R_{\eps,N}\right)&
\\
=2D_{\eps,N}+\Lip(\grad V)^2\Tr\left(\frac4{N}\sum_{l=1}^N|x_l-y_l|^2R_{\eps,N}\right)&
\\
\le(2+4\Lip(\grad V)^2)D_{\eps,N}&\,,
\ea
$$
so that
$$
\ba
\frac{dD_{\eps,N}}{dt}\le(2+4\Lip(\grad V)^2)D_{\eps,N}&
\\
+\frac1\eps\Tr\left(i\left[\bH_{\eps,N}^{\rho_\eps}\otimes I_{\fH_N}-\cH_{\eps,N}\otimes I_{\fH_N},\frac1N\sum_{j=1}^N(Q^*_jQ_j+P^*_jP_j)\right]R_{\eps,N}\right)&\,.
\ea
$$

Next observe that
$$
\ba
\left[\bH_{\eps,N}^{\rho_\eps}\otimes I_{\fH_N}-\cH_{\eps,N}\otimes I_{\fH_N},\frac1N\sum_{j=1}^N(Q^*_jQ_j+P^*_jP_j)\right]&
\\
=\frac1N\sum_{j=1}^N[V_{\rho_\eps}(x_j),P^*_jP_j]-\frac1{N^2}\sum_{1\le k<l\le N}[V(x_k-x_l),(P^*_kP_k+P^*_lP_l)]&
\\
=\frac1N\sum_{j=1}^N([V_{\rho_\eps}(x_j),P^*_j]P_j+P^*_j[V_{\rho_\eps}(x_j),P_j])&
\\
-\frac1{N^2}\sum_{1\le k<l\le N}([V(x_k-x_l),P^*_k]P_k+P^*_k[V(x_k-x_l),P_k])&
\\
-\frac1{N^2}\sum_{1\le k<l\le N}([V(x_k-x_l),P^*_l]P_l+P^*_l[V(x_k-x_l),P_l])&
\\
=\frac1N\sum_{j=1}^N([V_{\rho_\eps}(x_j),P^*_j]P_j+P^*_j[V_{\rho_\eps}(x_j),P_j])&
\\
-\frac1{N^2}\sum_{1\le k,l\le N}([V(x_k-x_l),P^*_k]P_k+P^*_k[V(x_k-x_l),P_k])&\,.
\ea
$$
Moreover
$$
[V_{\rho_\eps}(x_j),P^*_j]P_j+P^*_j[V_{\rho_\eps}(x_j),P_j]=i\eps(\grad V_{\rho_\eps}(x_j)P_j+P_j^*\grad V_{\rho_\eps}(x_j))\,,
$$
while
$$
\ba
\left[V(x_k-x_l),P^*_k\right]P_k+P^*_k[V(x_k-x_l),P_k]&
\\
=i\eps(\grad V(x_k-x_l)P_k+P^*_k\grad V(x_k-x_l))&\,.
\ea
$$
Therefore
$$
\ba
\frac{i}{\eps}\left[\bH_{\eps,N}^{\rho_\eps}\otimes I_{\fH_N}-\cH_{\eps,N}\otimes I_{\fH_N},\frac1N\sum_{j=1}^N(Q^*_jQ_j+P^*_jP_j)\right]&
\\
=
-\frac1{N}\sum_{j=1}^N(\grad V_{\rho_\eps}(x_j)P_j+P_j^*\grad V_{\rho_\eps}(x_j))&
\\
+\frac1{N^2}\sum_{1\le k,l\le N}(\grad V(x_k-x_l)P_k+P^*_k\grad V(x_k-x_l))&
\\
=-\frac1{N}\sum_{j=1}^N\left(\grad V_{\rho_\eps}(x_j)-\frac1N\sum_{k=1}^N\grad V(x_j-x_k)\right)P_j&
\\
-\frac1{N}\sum_{j=1}^NP^*_j\left(\grad V_{\rho_\eps}(x_j)-\frac1N\sum_{k=1}^N\grad V(x_j-x_k)\right)&\,.
\ea
$$

Thus
$$
\ba
\frac1\eps\Tr\left(i\left[\bH_{\eps,N}^{\rho_\eps}\otimes I_{\fH_N}-\cH_{\eps,N}\otimes I_{\fH_N},\frac1N\sum_{j=1}^N(Q^*_jQ_j+P^*_jP_j)\right]R_{\eps,N}\right)&
\\
=
-\frac1{N}\sum_{j=1}^N\Tr\left(\left(\grad V_{\rho_\eps}(x_j)-\frac1N\sum_{k=1}^N\grad V(x_j-x_k)\right)P_jR_{\eps,N}\right)&
\\
-\frac1{N}\sum_{j=1}^N\Tr\left(P^*_j\left(\grad V_{\rho_\eps}(x_j)-\frac1N\sum_{k=1}^N\grad V(x_j-x_k)\right)R_{\eps,N}\right)&
\\
\le
\frac1{N}\sum_{j=1}^N\Tr\left(\left|\grad V_{\rho_\eps}(x_j)-\frac1N\sum_{k=1}^N\grad V(x_j-x_k)\right|^2R_{\eps,N}\right)&
\\
+\frac1{N}\sum_{j=1}^N\Tr\left(P^*_jP_jR_{\eps,N}\right)&\,,
\ea
$$
so that
$$
\ba
\frac1\eps\Tr\left(i\left[\bH_{\eps,N}^{\rho_\eps}\otimes I_{\fH_N}-\cH_{\eps,N}\otimes I_{\fH_N},\frac1N\sum_{j=1}^N(Q^*_jQ_j+P^*_jP_j)\right]R_{\eps,N}\right)&
\\
\le D_{\eps,N}+\frac1{N}\sum_{j=1}^N\Tr\left(\left|\grad V_{\rho_\eps}(x_j)-\frac1N\sum_{k=1}^N\grad V(x_j-x_k)\right|^2R_{\eps,N}\right)&\,.
\ea
$$

Therefore, the differential inequality satisfied by $D_{\eps,N}$ is recast as
$$
\ba
\frac{dD_{\eps,N}}{dt}&\le(3+4\Lip(\grad V)^2)D_{\eps,N}
\\
&+\frac1{N}\sum_{j=1}^N\Tr\left(\left|\grad V_{\rho_\eps}(x_j)-\frac1N\sum_{k=1}^N\grad V(x_j-x_k)\right|^2R_{\eps,N}\right)
\\
&=(3+4\Lip(\grad V)^2)D_{\eps,N}
\\
&+\frac1{N}\sum_{j=1}^N\Tr\left(\left|\grad V_{\rho_\eps}(x_j)-\frac1N\sum_{k=1}^N\grad V(x_j-x_k)\right|^2\rho_\eps^{\otimes N}\right)\,.
\ea
$$
The last inequality comes from the fact that the multiplication by
$$
\left|\grad V_{\rho_\eps}(x_j)-\frac1N\sum_{k=1}^N\grad V(x_j-x_k)\right|^2
$$
acts on the variables variables $x_1,\ldots,x_N$ only, and that the first marginal $R_{\eps,N,1}$ de $R_{\eps,N}$ is $\rho_\eps^{\otimes N}$ by Lemma \ref{L-RN}.

Let us finally estimate the term
$$
\frac1{N}\sum_{j=1}^N\Tr\left(\left|\grad V_{\rho_\eps}(x_j)-\frac1N\sum_{k=1}^N\grad V(x_j-x_k)\right|^2\rho_\eps^{\otimes N}\right)\,.
$$ 
By a straightforward computation
$$
\ba
\Tr\left(\left|\grad V_{\rho_\eps}(x_j)-\frac1N\sum_{k=1}^N\grad V(x_j-x_k)\right|^2\rho_\eps^{\otimes N}\right)&
\\
=\int\left|\int\grad V(x_j-x')\rho_\eps(t,x',x')dx'-\frac1N\sum_{k=1}^N\grad V(x_j-x_k)\right|^2\prod_{\ell=1}^N\rho_\eps(t,x_\ell,x_\ell)dX_N&\,,
\ea
$$
Set 
$$
f_\eps(t,x):=\rho_\eps(t,x,x)\,,
$$
where $\rho_\eps$ is the solution of the Hartree equation (\ref{HartreeRho}). Since $\rho_\eps(t)\in\cD(\fH)$, then $f_\eps(t)$ is a probability density on $\bR^d$ for each $\eps>0$ and each $t\ge 0$. Applying Lemma \ref{L-CombIneq} with
$p=2$ and $\rho=f_\eps(t,\cdot)$ shows that (see formula (\ref{Ineq1}) below)
$$
\Tr\left(\left|\grad V_{\rho_\eps}(x_j)-\frac1N\sum_{k=1}^N\grad V(x_j-x_k)\right|^2\rho_\eps^{\otimes N}\right)\le\frac{8}N\|\grad V\|_{L^\infty}^2\,.
$$

Therefore
$$
\frac{dD_{\eps,N}}{dt}(t)\le\L D_{\eps,N}(t)+\frac{8}N\|\grad V\|_{L^\infty}^2
$$
with
$$
\Lambda:=3+4\Lip(\grad V)^2\,.
$$
Gronwall's inequality implies that
$$
D_{\eps,N}(t)\le D_{\eps,N}(0)e^{\L t}+\frac{8}N\|\grad V\|_{L^\infty}^2\frac{e^{\L t}-1}{\L}\,.
$$

\subsection{From $D_{\eps,N}$ to $MK_2^\eps(\rho_\eps(t)^{\otimes N},\rho_{\eps,N}(t))$}

%%%%%%%%%%%%%%%%%%%%%%%%%%%%%%%%%%%%%%%%%%%%%%%%%%%%%%%%%%%%%%%%%%%%%%%%%%%%%%%%%%%%%%%%%%%%%%%%%%%%%%%%%%%%%%%%%%%%%%%%%

Let us denote by $R^\mathbf{n}_{\eps,N}$ the marginal density of $R_{\eps,N}$ corresponding to the $n$ first particles. In other words, for each $A\in\cL(\fH_{2n})$, we set
$$
\Tr_{\fH_{2n}}(AR^\mathbf{n}_{\eps,N})=\Tr_{\fH_{2N}}(S(A\otimes I_{\fH_{2(N-n)}})S^*R_{\eps,N})
$$
where
$$
S\Psi(x_1,y_1,x_2,y_2,\ldots,x_N,y_N)=\Psi(x_1,\ldots,x_N,y_1,\ldots,y_N)\,.
$$

We claim that
\be\lb{De1}
R^\mathbf{n}_{\eps,N}\in\cQ(\rho_\eps^{\otimes n},\rho^\mathbf{n}_{\eps,N})\,.
\ee
Indeed, for each $B\in\cL(\fH_n)$, by Lemma \ref{L-RN}, one has
$$
\ba
\Tr_{\fH_{2n}}((B\otimes I_{\fH_n})R^\mathbf{n}_{\eps,N})=\Tr_{\fH_{2N}}(S(B\otimes I_{\fH_n}\otimes I_{\fH_{2(N-n)}})S^*R_{\eps,N})&
\\
=\Tr_{\fH_{2N}}((B\otimes I_{\fH_{N-n}})\otimes I_{\fH_N})R_{\eps,N})&
\\
=\Tr_{\fH_{2N}}((B\otimes I_{\fH_{N-n}})R_{\eps,N,1})&
\\
=\Tr_{\fH_N}((B\otimes I_{\fH_{N-n}})\rho_\eps^{\otimes N})&
\\
=\Tr_{\fH_n}(B\rho_\eps^{\otimes n})&\,,
\ea
$$
while
$$
\ba
\Tr_{\fH_{2n}}((I_{\fH_n}\otimes B)R^\mathbf{n}_{\eps,N})=\Tr_{\fH_{2N}}(S(I_{\fH_n}\otimes B\otimes I_{\fH_{2(N-n)}})S^*R_{\eps,N})&
\\
=\Tr_{\fH_{2N}}((I_{\fH_N}\otimes(B\otimes I_{\fH_{N-n}}))R_{\eps,N})&
\\
=\Tr_{\fH_{2N}}((B\otimes I_{\fH_{N-n}})R_{\eps,N,2})&
\\
=\Tr_{\fH_N}((B\otimes I_{\fH_{N-n}})\rho_{\eps,N})&
\\
=\Tr_{\fH_n}(B\rho^\mathbf{n}_{\eps,N})&\,.
\ea
$$

Observe further that, for each $j=1,\ldots,N$, one has
$$
\ba
\Tr_{\fH_{2N}}((Q_j^*Q_j+P^*_jP_j)R_{\eps,N})=&\Tr_{\fH_{2N}}(\cT^*_{\si_j}(Q_1^*Q_1+P^*_1P_1)\cT_{\si_j}R_{\eps,N})&
\\
=&\Tr_{\fH_{2N}}((Q_1^*Q_1+P^*_1P_1)\cT_{\si_j}R_{\eps,N}\cT^*_{\si_j})
\\
=&\Tr_{\fH_{2N}}((Q_1^*Q_1+P^*_1P_1)R_{\eps,N})\,,
\ea
$$
where $\si_j$ is the permutation of $\{1,\ldots,N\}$ exchanging $1$ and $j$ and leaving all the other integers invariant, because of the identity (\ref{SymReNt}) in Lemma \ref{L-RN}. 

Hence, for each $N\ge n\ge 1$, each $\eps>0$ and each $t\ge 0$, one has
$$
\ba
D_{\eps,N}(t)&=\Tr_{\fH_{2N}}((Q^*_jQ_j+P^*_jP_j)R_{\eps,N}(t))\qquad\hbox{ for each }j=1,\ldots,N\,,
\\
&=\frac1n\Tr_{\fH_{2N}}\left(\sum_{j=1}^n(Q^*_jQ_j+P^*_jP_j)R_{\eps,N}(t)\right)
\\
\\
&=\frac1n\Tr_{\fH_{2n}}\left(\sum_{j=1}^n(Q^*_jQ_j+P^*_jP_j)R^\mathbf{n}_{\eps,N}(t)\right)\,.
\ea
$$
Because of (\ref{De1}), this implies that
$$
D_{\eps,N}(t)\ge\frac1nMK^\eps_2(\rho_\eps(t)^{\otimes n},\rho^\mathbf{n}_{\eps,N}(t))\,,
$$
for each $N\ge n\ge 1$, each $\eps>0$ and each $t\ge 0$. In particular, 
\be\lb{1/2FinalDobIneq}
\ba
MK^\eps_2(\rho_\eps(t)^{\otimes n},\rho^\mathbf{n}_{\eps,N}(t))^2\le &nD_{\eps,N}(0)e^{\L t}+\frac{8n}{N}\|\grad V\|_{L^\infty}^2\frac{e^{\L t}-1}{\L}\,.
\ea
\ee
This inequality holds for each $R_{\eps,N}^{in}\in\cQ((\rho_\eps^{in})^{\otimes N},\rho_{\eps,N}^{in})$ satisfying the symmetry condition (\ref{SymReNin}). 

\subsection{Upper bound for $D_{\eps,N}(0)$}

%%%%%%%%%%%%%%%%%%%%%%%%%%%%%%%%%%%%%%%%%%%%%%%%%%%%%%%%%%%%%%%%%%%%%%%%%%%%%%%%%%%%%%%%%%%%%%%%%%%%%%%%%%%%%%%%%%%%%%%%%

Let $Q_{\eps,N}^{in}\in\cQ((\rho_\eps^{in})^{\otimes N},\rho_{\eps,N}^{in})$, and set 
$$
R_{\eps,N}^{in}:=\frac1{N!}\sum_{\si\in\fS_N}\cT_\si Q_{\eps,N}^{in}\cT^*_\si\,.
$$
By construction, $R_{\eps,N}^{in}\in\cD(\fH_N\otimes\fH_N)$ and satisfies (\ref{SymReNin}). On the other hand, for each $A\in\cL(\fH_N)$, one has
$$
\ba
\Tr_{\fH_N\otimes\fH_N}((I_{\fH_N}\otimes A)R_{\eps,N}^{in})=&\frac1{N!}\sum_{\si\in\fS_N}\Tr_{\fH_N\otimes\fH_N}((I_{\fH_N}\otimes A)\cT_\si Q_{\eps,N}^{in}\cT^*_\si)
\\
=&\frac1{N!}\sum_{\si\in\fS_N}\Tr_{\fH_N\otimes\fH_N}(\cT^*_\si(I_{\fH_N}\otimes A)\cT_\si Q_{\eps,N}^{in})
\\
=&\frac1{N!}\sum_{\si\in\fS_N}\Tr_{\fH_N\otimes\fH_N}((I_{\fH_N}\otimes(\tau^*_\si A\tau_\si))Q_{\eps,N}^{in})
\\
=&\frac1{N!}\sum_{\si\in\fS_N}\Tr_{\fH_N}(\tau^*_\si A\tau_\si\rho_{\eps,N}^{in})
\\
=&\frac1{N!}\sum_{\si\in\fS_N}\Tr_{\fH_N}(A\tau_\si\rho_{\eps,N}^{in}\tau^*_\si)
\\
=&\Tr_{\fH_N}(A\rho_{\eps,N}^{in})
\ea
$$
since $\rho_{\eps,N}^{in}$ satisfies the symmetry condition (\ref{SymDensIn}). By the same token
$$
\Tr_{\fH_N\otimes\fH_N}((A\otimes I_{\fH_N})R_{\eps,N}^{in})=\Tr_{\fH_N}(A(\rho_\eps^{in})^{\otimes N})
$$
for each $A\in\cL(\fH_N)$. Hence $R_{\eps,N}^{in}\in\cQ((\rho_\eps^{in})^{\otimes N},\rho_{\eps,N}^{in})$.

On the other hand
$$
\ba
D_{\eps,N}(0)=&\Tr_{\fH_{2N}}\left(\frac1N\sum_{j=1}^N(Q^*_jQ_j+P^*_jP_j)R_{\eps,N}^{in}\right)
\\
=&\Tr_{\fH_{2N}}\left(\frac1N\sum_{j=1}^N(Q^*_jQ_j+P^*_jP_j)\frac1{N!}\sum_{\si\in\fS_N}\cT_\si Q_{\eps,N}^{in}\cT^*_\si\right)
\\
=&\Tr_{\fH_{2N}}\left(\frac1{N!}\sum_{\si\in\fS_N}\cT^*_\si\left(\frac1N\sum_{j=1}^N(Q^*_jQ_j+P^*_jP_j)\right)\cT_\si Q_{\eps,N}^{in}\right)
\\
=&\frac1N\Tr_{\fH_{2N}}\left(\sum_{j=1}^N(Q^*_jQ_j+P^*_jP_j)Q_{\eps,N}^{in}\right)
\ea
$$
because
$$
\cT^*_\si\left(\frac1N\sum_{j=1}^N(Q^*_jQ_j+P^*_jP_j)\right)\cT_\si=\frac1N\sum_{j=1}^N(Q^*_jQ_j+P^*_jP_j)
$$
for each $\si\in\fS_N$.

Inserting this in (\ref{1/2FinalDobIneq}) shows that
$$
\ba
\frac1nMK^\eps_2(\rho_\eps(t)^{\otimes n},\rho^\mathbf{n}_{\eps,N}(t))^2\le&\frac{8}{N}\|\grad V\|_{L^\infty}^2\frac{e^{\L t}-1}{\L}
\\
&+\frac{e^{\L t}}N\Tr_{\fH_{2N}}\left(\sum_{j=1}^N(Q^*_jQ_j+P^*_jP_j)Q_{\eps,N}^{in}\right)
\ea
$$
for all $Q_{\eps,N}^{in}\in\cQ((\rho_\eps^{in})^{\otimes N},\rho_{\eps,N}^{in})$ with $N\ge n\ge 1$. Minimizing over $Q_{\eps,N}^{in}$ leads to 
$$
\frac1nMK^\eps_2(\rho_\eps(t)^{\otimes n},\rho^\mathbf{n}_{\eps,N}(t))^2\le\frac{8}{N}\|\grad V\|_{L^\infty}^2\frac{e^{\L t}-1}{\L}+\frac{e^{\L t}}NMK_2^\eps((\rho_\eps^{in})^{\otimes N},\rho_{\eps,N}^{in})
$$
which is precisely inequality (\ref{MainIneq}).

Next, in the special case where $\rho_\eps^{in}=\Op^T_\eps((2\pi\eps)^d\mu_\eps^{in})$ with $\mu_\eps^{in}\in\cP_2(\bC^d)$, while $\rho_{\eps,N}^{in}=\Op^T_\eps((2\pi\eps)^{dN}\mu_{\eps,N}^{in})$ with $\mu_{\eps,N}^{in}\in\cP_2((\bC^d)^N)$
satisfies (\ref{SymDensIn}), one has $(\rho_\eps^{in})^{\otimes N}=\Op^T_\eps((2\pi\eps)^{dN}(\mu_\eps^{in})^{\otimes N})$, and we deduce from Theorem \ref{T-MKeps} (2) that
$$
MK_2^\eps((\rho_\eps^{in})^{\otimes N},\rho_{\eps,N}^{in})^2\le\MKd((\mu_\eps^{in})^{\otimes N},\mu_{\eps,N}^{in})^2+2Nd\eps\,.
$$
Inserting this in (\ref{MainIneq}) shows that
$$
\ba
\frac1nMK^\eps_2(\rho_\eps(t)^{\otimes n},\rho^\mathbf{n}_{\eps,N}(t))^2\le&\frac{8}{N}\|\grad V\|_{L^\infty}^2\frac{e^{\L t}-1}{\L}
\\
&+e^{\L t}\left(\frac1N\MKd((\mu_\eps^{in})^{\otimes N},\mu_{\eps,N}^{in})^2+2d\eps\right)
\ea
$$
which is (\ref{MainIneq2}). This concludes the proof of Theorem \ref{T-UQMF}.

\begin{Rmk}
Let $\rho_\eps$ be the solution of the Cauchy problem for the Hartree equation with initial data $\rho^{in}_\eps\in\cD(\fH)$ such that
$$
\Tr((|x|^2-\eps^2\Dlt)\rho_\eps^{in})<\infty\,.
$$
Then, for each $\eps>0$ and each $t\ge 0$, one has
$$
\Tr(|x|^2\rho_\eps(t))\le(\Tr(|x|^2\rho_\eps^{in})+te^t(\Tr(-\eps^2\Dlt\rho_\eps^{in})+2\|V\|_{L^\infty})<\infty\,,
$$
as a consequence of the differential inequality
$$
\left|\frac{d}{dt}\Tr\left(|x|^2\rho_\eps(t)\right)\right|=\left|\Tr\left([\tfrac12\eps\Dlt,|x|^2]\rho_\eps(t)\right)\right|\le\Tr\left(\left(|x|^2-\eps^2\Dlt\right)\rho_\eps(t)\right)
$$
and of the energy conservation
$$
\frac{d}{dt}\left(\Tr(-\eps^2\Dlt\rho_\eps(t))+\iint V(x-z)\rho_\eps(t,z,z)\rho_\eps(t,x,x)dxdz\right)=0\,.
$$
In particular, for each $t\ge 0$ and each $\eps>0$, one has
$$
\Tr((|x|^2-\eps^2\Dlt)\rho_\eps(t))<\infty\,.
$$
\end{Rmk}

%%%%%%%%%%%%%%%%%%%%%%%%%%%%%%%%%%%%%%%%%%%%%%%%%%%%%%%%%%%%%%%%%%%%%%%%%%%%%%%%%%%%%%%%%%%%%%%%%%%%%%%%%%%%%%%%%%%%%%%%%

\begin{appendix}

%%%%%%%%%%%%%%%%%%%%%%%%%%%%%%%%%%%%%%%%%%%%%%%%%%%%%%%%%%%%%%%%%%%%%%%%%%%%%%%%%%%%%%%%%%%%%%%%%%%%%%%%%%%%%%%%%%%%%%%%%

\section{Proof of Lemma \ref{L-CombIneq}}\lb{A-CombIneq}

%%%%%%%%%%%%%%%%%%%%%%%%%%%%%%%%%%%%%%%%%%%%%%%%%%%%%%%%%%%%%%%%%%%%%%%%%%%%%%%%%%%%%%%%%%%%%%%%%%%%%%%%%%%%%%%%%%%%%%%%%

Without loss of generality, we treat the case $j=1$. Define
$$
\cV(z):=F\star\rho(x_1)-F(x_1-z)\,,\qquad z\in\bR^d\,.
$$

\subsection{The case where $p\ge 2$ is an even integer.}
%%%%%%%%%%%%%%%%%%%%%%%%%%%%%%%%%%%%%%%%%%%%%%%%%%%%%%%%%%%%%%%%%%%%%%%%%%%%%%%%%%%%%%%%%%%%%%%%%%%%%%%%%%%%%%%%%%%%%%%%%

Start with the decomposition
$$
\ba
\int\left|F\star\rho(x_1)-\frac1N\sum_{k=1}^NF(x_1-x_k)\right|^{p}\prod_{m=1}^N\rho(x_m)dx_m&
\\
=\int\left|\frac1N\sum_{i=1}^N\cV(x_i)\right|^p\prod_{m=1}^N\rho(x_m)dx_m&
\\
=\frac1{N^p}\sum_{j,k\in\{1,\ldots,N\}^{\{1,\ldots,p/2\}}}\int\prod_{l=1}^{p/2}\cV(x_{j(l)})\cdot\cV(x_{k(l)})\prod_{m=1}^N\rho(x_m)dx_m&\,.
\ea
$$
For each pair of maps $j,k\!\in\!\{1,\ldots,N\}^{\{1,\ldots,p/2\}}$, define the map $g\!\in\!\{1,\ldots,N\}^{\{1,\ldots,p\}}$ as follows:
$$
g(l)=j(l)\hbox{ if }1\le l\le p/2\,,\quad g(l)=k(l-p/2)\hbox{ if }p/2+1\le l\le p\,.
$$
Conversely, given $g\in\{1,\ldots,N\}^{\{1,\ldots,p\}}$, one reconstructs the maps $j,k$ by the formulas
$$
j(l)=g(l)\,,\quad k(l)=g(l+p/2)\,,\qquad 1\le l\le p/2\,.
$$

Hence
$$
\ba
\int\left|F\star\rho(x_1)-\frac1N\sum_{k=1}^NF(x_1-x_k)\right|^p\prod_{m=1}^N\rho(x_m)dx_m&
\\
=\frac1{N^p}\sum_{g\in\{1,\ldots,N\}^{\{1,\ldots,p\}}}\int\prod_{l=1}^{p/2}\cV(x_{g(l)})\cdot\cV(x_{g(l+p/2)})\prod_{m=1}^N\rho(x_m)dx_m&
\\
=\frac1{N^p}\sum_{g\in S_N^p}\int\prod_{l=1}^{p/2}\cV(x_{g(l)})\cdot\cV(x_{g(l+p/2)})\prod_{m=1}^N\rho(x_m)dx_m&
\\
+\frac1{N^p}\sum_{g\in M_N^p}\int\prod_{l=1}^{p/2}\cV(x_{g(l)})\cdot\cV(x_{g(l+p/2)})\prod_{m=1}^N\rho(x_m)dx_m&\,,
\ea
$$
where
$$
\ba
M_N^p&:=\{g\in\{1,\ldots,N\}^{\{1,\ldots,p\}}\hbox{ s.t. }\#g^{-1}(\{m\})\not=1\hbox{ for each }m=2,\ldots,N\}\,,
\\
S_N^p&:=\{1,\ldots,N\}^{\{1,\ldots,p\}}\setminus M_N^p\,.
\ea
$$

Obviously, for each $m\in\{1,\ldots,N\}^{\{1,\ldots,p\}}$, one has
$$
\left|\int\prod_{l=1}^{p/2}\cV(x_{g(l)})\cdot\cV(x_{g(l+p/2)})\prod_{m=1}^N\rho(x_m)dx_m\right|\le(2\|F\|_{L^\infty})^p\,.
$$
Next, for each $g\in S_N^p$, define
\be\lb{Deflk}
\ba
{}&l_g:=\min\{l=1,\ldots,p\hbox{ s.t. }g(l)>1\hbox{ and }\#g^{-1}(\{g(l))\})=1\}\,,
\\
&\hat l_g=l_g+p/2\quad\hbox{ if }l_g\le p/2\,,\qquad\hbox{ and }\hat l_g=l_g-p/2\quad\hbox{ if }l_g>p/2\,.
\ea
\ee

Thus, if $g\in S_N^p$, one has
$$
\ba
\int\prod_{l=1}^{p/2}\cV(x_{g(l)})\cdot\cV(x_{g(l+p/2)})\prod_{m=1}^N\rho(x_m)dx_m&
\\
=\int\left(\prod_{1\le l\le p/2\atop l\notin\{l_g,\hat l_g\}}\cV(x_{g(l)})\cdot\cV(x_{g(p/2+l)})\right)\cV(x_{g(\hat l_g)})\prod_{1\le m\le N\atop m\not=g(l_g)}\rho(x_m)dx_m&
\\
\cdot\int\cV(x_{g(l_g)})\rho(x_{g(l_g)})dx_{g(l_g)}=0&\,,
\ea
$$
since
$$
\ba
\int\cV(x_{g(l_g)})\rho(x_{g(l_g)})dx_{g(l_g)}=\int(F\star\rho(x_1)-F(x_1-x_{g(l_g)}))\rho(x_{g(l_g)})dx_{g(l_g)}&
\\
=F\star\rho(x_1)-\int F(x_1-x_{g(l_g)}))\rho(x_{g(l_g)})dx_{g(l_g)}=0&\,.
\ea
$$

Hence
\be\lb{Ineq0}
\int\left|F\star\rho(x_j)-\frac1N\sum_{k=1}^NF(x_1-x_k)\right|^p\prod_{m=1}^N\rho(x_m)dx_m
\le
\frac{\#M_N^p}{N^p}(2\|F\|_{L^\infty})^p\,.
\ee

Now 
$$
\#M_N^p=N^{2p}-\#S_N^p\,,
$$
and we next compute $\#S_N^p$. 

For each element $g\in S_N^p$, there are $N-1$ choices for $g(l_g)\in\{2,N\}$, where $l_g$ is the integer defined in (\ref{Deflk}). For each such choice, the restriction of the map $g$ to $\{1,\ldots,p\}\setminus\{l_g\}$ takes its values in 
$\{1,\ldots,N\}\setminus\{g(l_g)\}$ and can be chosen arbitrarily among the maps from $\{1,\ldots,p\}\setminus\{l_g\}$ to $\{1,\ldots,N\}\setminus\{g(l_g)\}$. Hence
$$
\#S_N^p=(N-1)(N-1)^{p-1}=(N-1)^p
$$
so that
$$
\#M_N^p=N^p-(N-1)^p\,.
$$
Hence
$$
\frac{\#M_N^p}{N^p}=1-\left(1-\frac1N\right)^p\le\frac{p}N\,.
$$

Inserting this inequality in (\ref{Ineq0}) shows that
\be\lb{Ineq1}
\int\left|F\star\rho(x_1)-\frac1N\sum_{k=1}^NF(x_1-x_k)\right|^p\prod_{m=1}^N\rho(x_m)dx_m\le\frac{p}{N}(2\|F\|_{L^\infty})^p\,.
\ee

\subsection{The case where $p\ge 2$ is not an even integer}

%%%%%%%%%%%%%%%%%%%%%%%%%%%%%%%%%%%%%%%%%%%%%%%%%%%%%%%%%%%%%%%%%%%%%%%%%%%%%%%%%%%%%%%%%%%%%%%%%%%%%%%%%%%%%%%%%%%%%%%%%

Write $p/2$ as
$$
p/2=(1-\th)[p/2]+\th([p/2]+1)\,,\qquad\hbox{ with }0<\th<1\,,
$$
where $[z]$ denotes the largest integer less than or equal to $z$. Using H\"older's inequality and (\ref{Ineq1}) shows that
$$
\ba
\int\left|F\star\rho(x_1)-\frac1N\sum_{k=1}^NF(x_1-x_k)\right|^{p}\prod_{m=1}^N\rho(x_m)dx_m&
\\
\le\frac{(2[p/2])^{1-\th}((2[p/2]+2)^\th}{N}(2\|F\|_{L^\infty})^p&
\\
\le\frac{2[p/2]+2}{N}(2\|F\|_{L^\infty})^p&\,.
\ea
$$

\subsection{The case where $0<p<2$}

%%%%%%%%%%%%%%%%%%%%%%%%%%%%%%%%%%%%%%%%%%%%%%%%%%%%%%%%%%%%%%%%%%%%%%%%%%%%%%%%%%%%%%%%%%%%%%%%%%%%%%%%%%%%%%%%%%%%%%%%%

Using Jensen's inequality and inequality (\ref{Ineq1}) for $p=2$ shows that
$$
\ba
\int\left|F\star\rho(x_1)-\frac1N\sum_{k=1}^NF(x_1-x_k)\right|^{p}\prod_{m=1}^N\rho(x_m)dx_m&
\\
\le\left(\int\left|F\star\rho(x_1)-\frac1N\sum_{k=1}^NF(x_1-x_k)\right|^2\prod_{m=1}^N\rho(x_m)dx_m\right)^{p/2}
\\
\le\frac{2^{p/2}}{N^{p/2}}(2\|F\|_{L^\infty})^p\le\frac{2[p/2]+2}{N^{p/2}}(2\|F\|_{L^\infty})^p&\,.
\ea
$$

%%%%%%%%%%%%%%%%%%%%%%%%%%%%%%%%%%%%%%%%%%%%%%%%%%%%%%%%%%%%%%%%%%%%%%%%%%%%%%%%%%%%%%%%%%%%%%%%%%%%%%%%%%%%%%%%%%%%%%%%%

\section{T\"oplitz operators, Wigner and Husimi transforms}\lb{A-Toeplitz}

%%%%%%%%%%%%%%%%%%%%%%%%%%%%%%%%%%%%%%%%%%%%%%%%%%%%%%%%%%%%%%%%%%%%%%%%%%%%%%%%%%%%%%%%%%%%%%%%%%%%%%%%%%%%%%%%%%%%%%%%%

For each $z\in\bC^d$ with $\Re(z)=q$ and $\Im(z)=p$, we denote by $|z,\eps\ra$ the wave function (sometimes referred to as a ``coherent state'') defined by the formula
\be\lb{CohSt}
|z,\eps\ra(x):=(\pi\eps)^{-d/4}e^{-(x-q)^2/2\eps}e^{ip\cdot x/\eps}\,.
\ee
We recall the bra-ket notation: $|z,\eps\ra\la z,\eps|$ designates the orthogonal projection on the line $\bC|z,\eps\ra$ in $L^2(\bC^d)$.

With the normalization above, denoting $\fH:=L^2(\bR^d)$, one has both
$$
\la z,\eps|z,\eps\ra:=\big\||z,\eps\ra\big\|^2_{\fH}=\int_{\bR^d}\big||z,\eps\ra(x)\big|^2dx=1\,,
$$
and
\be\lb{ToepId}
\frac1{(2\pi\eps)^d}\int_{\bC^d}|z,\eps\ra\la z,\eps|dz=I_\fH\,,
\ee
where the integral on the left hand side is to be understood in the weak operator sense.

To each positive or finite Borel measure $\mu$ on $\bC^d$, we define the T\"oplitz operator at scale $\eps$ with symbol $\mu$ by the formula
$$
\Op^T_\eps(\mu):=\frac1{(2\pi\eps)^d}\int_{\bC^d}|z,\eps\ra\la z,\eps|\mu(dz)\,.
$$
This is a possibly unbounded operator on $\fH$, defined by duality by the formula
$$
\ba
\la v|\Op^T_\eps(\mu)u\ra_\fH:=&\frac1{(2\pi\eps)^d}\int\overline{u(x)}\Op^T_\eps(\mu)v(x)dx
\\
=&\frac1{(2\pi\eps)^d}\int_{\bC^d}\la v|z,\eps\ra\la z,\eps|u\ra\mu(dz)
\ea
$$
for all $u,v\in\fH$ such that $z\mapsto\la z,\eps|u\ra$ and $z\mapsto\la v|z,\eps\ra$ belong to $L^2(\bC^d,\mu)$. 

If $\mu$ is a positive measure, then
\be\lb{SAToep}
\Op^T_\eps(\mu)=\Op^T_\eps(\mu)^*\ge 0\,,\quad\Tr(OP^T_\eps(\mu))=\frac1{(2\pi\eps)^d}\int_{\bC^d}\mu(dz)\,.
\ee
If $f\in L^\infty(\bC^d)$, then
$$
\Op^T_\eps(f)=\frac1{(2\pi\eps)^d}\int_{\bC^d}|z,\eps\ra\la z,\eps|f(z)dz\in\cL(\fH)\,,\hbox{ with }\|\Op^T_\eps(f)\|\le\|f\|_{L^\infty}\,.
$$

The following formulas are elementary but fundamental: if $f$ is a quadratic form on $\bR^d$, then
\be\lb{ToepQuad}
\left\{
\ba
{}&\Op^T_\eps(f(q))=(f(x)+\tfrac14\eps(\Dlt f)I_\fH)\,,
\\
&\Op^T_\eps(f(p))=(f(-i\eps\d_x)+\tfrac14\eps(\Dlt f)I_\fH)\,,
\ea
\right.
\ee
where $f(x)$ designates the unbounded operator defined on $L^2(\bR^d)$ by the formula $(f(x)\phi)(x)=f(x)\phi(x)$ .

Let $A$ be an unbounded operator on $L^2(\bR^d)$, and assume that its Schwartz kernel $k_A$ belongs to $\cS'(\bR^d\times\bR^d)$. In other words, $A$ is the linear map from $\cS(\bR^d)$ to $\cS'(\bR^d)$ defined by the formula
$$
\la Au,v\ra_{\cS'(\bR^d),\cS(\bR^d)}=\la k_A,v\otimes u\ra_{\cS'(\bR^d\times\bR^d),\cS(\bR^d\times\bR^d)}\,.
$$
The Wigner transform of $A$ at scale $\eps$ is defined as
\be\lb{Wigner}
W_\eps[A]:=(2\pi)^{-d}\cF_2(k_A\circ J_\eps)\,,\quad\hbox{ where }J_\eps(x,y)=(x+\tfrac12\eps y,x-\tfrac12\eps y)
\ee
and where $\cF_2$ is the partial Fourier transform in the second variable. When $k_A\circ J_\eps$ is an integrable function, one has
$$
W_\eps[A](x,\xi)=(2\pi\eps)^{-d}\int_{\bR^d}e^{-i\xi\cdot y/\eps}k_A(x+\tfrac12y,x-\tfrac12y)dy\,.
$$
In particular, for each $q,p\in\bR^d$, one has
\be\lb{WCohSt}
\ba
W_\eps[|q+ip,\eps\ra\la q+ip,\eps|](x,\xi)&
\\
=(2\pi\eps)^{-d}\int_{\bR^d}(\pi\eps)^{-d/2}e^{-i\xi\cdot y/\eps}e^{ip\cdot y/\eps}e^{-(|x+y/2-q|^2+|x-y/2-q|^2)/2\eps}dy&
\\
=(\pi\eps)^{-d/2}e^{-|x-q|^2/\eps}(2\pi\eps)^{-d}\int_{\bR^d}e^{i(p-\xi)\cdot y/\eps}e^{-|y|^2/4\eps}dy&
\\
=(\pi\eps)^{-d}e^{-(|x-q|^2+|\xi-p|^2)/\eps}&\,,
\ea
\ee
since 
$$
\int_{\bR^d}(\eps/4\pi)^{d/2}e^{-iy\cdot(\xi-p)}e^{-\eps y^2/4}e^{-|x-q|^2/\eps}dy=e^{-|\xi-p|^2/\eps}
$$
(which is the classical formula for the Fourier transform of a Gaussian density). Thus, for each positive or finite Borel measure on $\bR^d\times\bR^d$, one has
\be\lb{WToep}
W_\eps[\Op^T_\eps(\mu)]=\frac1{(2\pi\eps)^d}G^{2d}_{\eps/2}\star\mu\,,
\ee
where $G^n_a$ is the centered Gaussian density on $\bR^n$ with covariance matrix $aI$. With $\mu(dqdp)=dqdp$, or $\mu(dqdp)=f(q)dqdp$, or $\mu(dqdp)=f(p)dqdp$, where $f$ is an arbitrary quadratic form on $\bR^d$, one finds that
\be\lb{WQuad}
\left\{
\ba
{}&W_\eps[I_\fH](x,\xi)=(2\pi\eps)^{-d}\,,
\\
&W_\eps[f(x)](x,\xi)=(2\pi\eps)^{-d}f(x)\,,
\\
&W_\eps[f(-i\eps\d_x)](x,\xi)=(2\pi\eps)^{-d}f(\xi)\,.
\ea
\right.
\ee
Indeed, when $\mu(dqdp)=g(q,p)dqdp$ with $g$ a polynomial with degree at most $m$, one has
\be\lb{ConvGquad}
G^{2d}_{\eps/2}\star_{x,\xi}g=e^{\eps\Dlt_{x,\xi}/4}g=\sum_{0\le n\le m/2}\frac{\eps^n}{4^nn!}\Dlt^n_{x,\xi}g\,.
\ee
Thus, for $g$ of degree $\le 2$, one finds that $G^{2d}_{\eps/2}\star_{x,\xi}g=(1+\tfrac14\eps\Dlt_{x,\xi})g$. Together with (\ref{ToepId}) and (\ref{ToepQuad}), this formula justifies (\ref{WQuad}).

The Husimi transform of an operator $A$ at scale $\eps$ is defined in terms of the Wigner transform of $A$ by the formula
$$
\tilde W_\eps[A]:=G^{2d}_{\eps/2}\star_{x,\xi}W_\eps[A]\,.
$$
Let $R\in\cD(\fH)$; for each $\psi\in L^2(\bR^d)$, one has
$$
\Tr(|\psi\ra\la\psi|R)=(2\pi\eps)^d\iint\overline{W_\eps[\psi](x,\xi)}W_\eps[R](x,\xi)dxd\xi
$$
by the definition (\ref{Wigner}) of the Wigner transform, and Plancherel's identity. Specializing this formula to $\psi=|z,\eps\ra$, one finds that
$$
\ba
\Tr(|z,\eps\ra\la z,\eps|R)&=\la z,\eps|R|z,\eps\ra
\\
&=(2\pi\eps)^d\tilde W_\eps[R](q,p)\,,\quad\hbox{ where }q=\Re(z)\hbox{ and }p=\Im(z)\,.
\ea
$$
More generally, if $\mu$ is a positive or finite Borel measure on $\bC^d$, one deduces from the previous identity and the Fubini theorem that
\be\lb{TrToep}
\Tr(\Op^T_\eps(\mu)R)=\int_{\bC^d}\tilde W_\eps[R](z)\mu(dz)\,.
\ee
(The previous formula is the particular case where $\mu=(2\pi\eps)^d\de_z$.)

In particular, if $R=\Op^T_\eps((2\pi\eps)^d\mu)$ with $\mu\in\cP_2(\bR^d\times\bR^d)$, for each quadratic form $f$ on $\bR^d$, one has
\be\lb{MK2}
\ba
\Tr((f(x)+f(-i\eps\d_x))\Op^T_\eps((2\pi\eps)^d\mu))&
\\
=\iint_{\bR^d\times\bR^d}(f(q)+f(p))\mu(dqdp)+\tfrac12\eps\Dlt f&\,.
\ea
\ee
Indeed, applying (\ref{TrToep}) shows that
$$
\ba
\Tr((f(x)+f(-i\eps\d_x))\Op^T_\eps((2\pi\eps)^d\mu))&
\\
=\Tr(\Op^T_\eps(f(q)+f(p)-\tfrac12\eps(\Dlt f))\Op^T_\eps((2\pi\eps)^d\mu))&
\\
=\iint_{\bR^d\times\bR^d}(G^{2d}_{\eps/2}\star W_\eps[\Op^T_\eps((2\pi\eps)^d\mu)])(q,p)(f(q)+f(p)-\tfrac12\eps(\Dlt f))dqdp&
\\
=\iint_{\bR^d\times\bR^d}W_\eps[\Op^T_\eps((2\pi\eps)^d\mu)](q,p)(G^{2d}_{\eps/2}\star(f(q)+f(p)-\tfrac12\eps(\Dlt f))dqdp&
\\
=\iint_{\bR^d\times\bR^d}(G^{2d}_{\eps/2}\star G^{2d}_{\eps/2}\star(f(q)+f(p)-\tfrac12\eps(\Dlt f))\mu(dqdp)&\,,
\ea
$$
where the last equality follows from (\ref{WToep}). We conclude by observing that 
$$
\ba
G^{2d}_{\eps/2}\star G^{2d}_{\eps/2}\star(f(q)+f(p)-\tfrac12\eps(\Dlt f))&=e^{\eps\Dlt_{q,p}/2}(f(q)+f(p)-\tfrac12\eps(\Dlt f))
\\
&=(f(q)+f(p)+\tfrac12\eps(\Dlt f))
\ea
$$
according to (\ref{ConvGquad}).
\end{appendix}

%%%%%%%%%%%%%%%%%%%%%%%%%%%%%%%%%%%%%%%%%%%%%%%%%%%%%%%%%%%%%%%%%%%%%%%%%%%%%%%%%%%%%%%%%%%%%%%%%%%%%%%%%%%%%%%%%%%%%%%%%

\smallskip
\noindent
\textbf{Acknowledgements.} We thank N. Fournier A. Guillin for informing us of their newly published results in \cite{FourGuil}, which we used in an earlier version of this paper, and M. Pulvirenti, who brought to our attention the problem studied 
in this work. We also thank the anonymous referees for their suggestions which have improved our main results.

%%%%%%%%%%%%%%%%%%%%%%%%%%%%%%%%%%%%%%%%%%%%%%%%%%%%%%%%%%%%%%%%%%%%%%%%%%%%%%%%%%%%%%%%%%%%%%%%%%%%%%%%%%%%%%%%%%%%%%%%%

\end{document}